\newtheorem{thm}{Theorem}[section]
\newtheorem{lem}[thm]{Lemma}
\newtheorem{rem}[thm]{Remark}
\newtheorem{defn}[thm]{Definition}
\newcommand{\Z}{\mathbb{Z}}
\newcommand{\R}{\mathbb{R}}
\newcommand{\C}{\mathbb{C}}
\newcommand{\hatD}{\widehat D}
\newcommand{\ds}{W}
\def\fsl{\mathfrak{sl}}
\def\SL{\mathrm{SL}}
\def\trace{\mathrm{trace}}
\def\fg{\mathfrak{g}}
\def\fA{\mathfrak{A}}
\def\Ahat{\widehat{\mathfrak{A}}}
\def\fB{\mathfrak{B}}
\def\fC{\mathfrak{C}}
\def\fD{\mathfrak{D}}
\def\fK{\mathfrak{K}}
\def\cE{\mathcal{E}}
\def\cZ{\mathcal{Z}}
\def\cY{\mathcal{Y}}
\def\cU{\mathcal{U}}
\def\Hdirac{\mathbb{H}}
\begin{document}
\title[Local spectrum]{The local spectrum of the  Dirac operator  for the universal cover of $\SL_2(\R)$}   

\author[Brodzki et al.]{Jacek Brodzki, Graham A. Niblo, Roger Plymen and Nick Wright}
\address{Mathematical Sciences, University of Southampton, SO17 1BJ,  England}
\email{j.brodzki@soton.ac.uk, g.a.niblo@soton.ac.uk, r.j.plymen@soton.ac.uk, wright@soton.ac.uk}

\date{\today}
\subjclass[2010]{}
\keywords{universal cover of $\SL_2(\R)$,  Dirac operator, $K$-theory, localised spectrum, Dirac cohomology}
\maketitle

\begin{abstract}   
Using representation theory, we compute the spectrum of the Dirac operator on  the universal covering group of $\SL_2(\R)$, exhibiting it as the generator 
of $KK^1(\C, \fA)$, where $\fA$ is the reduced $C^*$-algebra of the group. This yields a new and direct computation of the $K$-theory of $\fA$. 
A fundamental role is played by the limit-of-discrete-series representation, which is the frontier between the discrete and the principal series of the group. 
We provide a detailed analysis of the localised spectra of the Dirac operator
and compute the Dirac cohomology. 
 \end{abstract}


\section{Introduction}\label{introduction}    Let $G$ denote the universal cover of $\SL_2(\R)$ and     let $\fA = C^*_r(G)$ be the reduced $C^*$-algebra of $G$.     
The group $G$ is a non-linear group with infinite centre which places it outside the range of much classical representation theory of Harish-Chandra et al and in particular the $K$-theory of $\fA$ is known only through the deep results on the Connes-Kasparov conjecture due to Chabert, Echterhoff and Nest \cite{CEN}.  In this article, we give a direct computation of the $K$-theory of $\fA$ using the Plancherel formula for $G$, established  by Puk\'anszky \cite{P}, studying the algebra via the Fourier transform.  Moreover this also enables us to compute the spectrum of the Dirac operator on $G$, thereby establishing that this is the generator 
of $KK^1(\C, \fA)$. 

%

We use the Fourier transform to identify $\fA$ as an algebra of operator-valued functions on a parameter space built from the discrete series and the principal series of $G$. 
Here there is an analogy with the representation theory of $\SL_2(\R)$. This has a representation, the limit-of-discrete-series, which is a reducible representation in the 
principal series of $\SL_2(\R)$ sharing properties with representations from the discrete series. Now, for $G$, the analogue of the discrete series is a field of pairs of irreducible representations
parametrised by the interval $(-\infty, 1/4)$, whose limit at $1/4$ is again a reducible representation in the principal series. This limit representation we call the \emph{limit-of-discrete-series} for $G$. The principal series is parametrised by a cylinder $S^1\times [1/4, \infty)$ and it is the attaching of the two spaces at the limit-of-discrete-series that yields the 
generator in $K$-theory. We remark that the Casimir operator for $G$, which acts as a scalar on each irreducible representation, gives the real parameter in $(-\infty,1/4)$ for the 
discrete series and $[1/4, \infty)$ for the principal series.

%
%

As one might expect, the Dirac operator $D$, defined on spinor fields on $G$, provides an interesting element of the $K$-theory of $\fA$. 
 We examine the spectrum of $D$ by identifying its Fourier transform $\widehat{D}$ and in particular we investigate how the spectrum  changes as the Casimir parameter varies over $(-\infty, \infty)$. 
 
 The operator $\widehat{D}$ can be thought of as a field of operators on the tensor product of the field of representations of $G$ with the field of spinors. 
 The idea is to decompose the tensor product field into $\widehat{D}$-invariant pieces. We have one infinite dimensional field on which $\widehat{D}$ has a spectral gap and 
 thus is trivial at the level of $KK$-theory. This leaves a one-dimensional field for which the spectrum varies from minus infinity to infinity with the Casimir parameter. 
  This establishes  that $D$ gives a non-trivial element in $K$-theory, and a homotopy argument  then shows that  the Dirac class generates $KK^1(\C,\fA)$. 
 
We provide also a detailed analysis of the localised spectra of $D$ over the tempered dual of the group $G$. This demonstrates that there is much more information about the Dirac operator that can be derived from representation theory of $G$ than that provided by $K$-theory, see Figures \ref{fig:spectra1}, \ref{fig:spectra2}, and \ref{fig:helix}. We conclude by computing the Dirac cohomology of D in the sense of \cite{HP}.

\section{Representation theory and the Fourier transform}\label{reduced C^*-algebra}     
We begin with the Plancherel formula of  Puk\'anszky \cite{P} for the universal cover of $\SL_2(\R)$.

\begin{thm}\label{PT}  The following representations enter into the Plancherel formula: 
 \[
 \begin{array}{lll}
 \mathrm{Principal\; series}: &  \{(V_{q, \tau},\pi_{q,\tau}):   q \geq 1/4,  \: 0 \leq \tau \le 1\},   &  \Omega = q\\
\mathrm{Discrete\; series}: &   \{ (\ds_{\ell,\pm},\omega_{\ell,\pm}): \; \ell > 1/2\},  &  \Omega = \ell(1 - \ell)
\end{array}
\]
where $\Omega$ is the Casimir operator.    For every test function $f$ on $G$,  smooth with compact support, we have

\[
f(e) = \int_0^{\infty} \int_0^1 \sigma[ \Re \tanh \pi(\sigma + i\tau)] \Theta(\sigma,\tau)(f)d\tau d\sigma + \int_{1/2}^{\infty}(\ell - 1/2)\Theta({\ell})(f)d\ell
\]
where the Harish-Chandra characters are 
\[
\begin{split}
\Theta(\sigma,\tau)(f) & =  \trace \int_G \pi_{q,\tau}(g) f(g)dg\\
\Theta(\ell)(f) & =  \trace \int_G (\omega_{\ell,+} \oplus \omega_{\ell,-})(g)f(g)dg
\end{split}
\]
and  $\sigma = \sqrt{q - 1/4}$.  
\end{thm}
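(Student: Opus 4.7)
The plan is to follow Puk\'anszky's strategy: identify the tempered unitary dual of $G$, compute the Harish-Chandra characters of its constituents on the regular set, and then invert the operator-valued Fourier transform at the identity. The representation-theoretic input is dictated by the geometry of $G$: because $G$ has infinite cyclic centre and maximal compact subgroup $K \cong \R$, the principal series obtained by unitary induction from the Iwasawa Borel subgroup acquires a genuine continuous parameter $\tau \in [0,1]$ controlling the character of the centre, with the Casimir eigenvalue $q = \sigma^2 + 1/4$ fixing the infinitesimal character; this produces $\pi_{q,\tau}$ on $L^2(\R)$. The holomorphic and antiholomorphic discrete series of $\SL_2(\R)$, which for the linear group exist only at integer extremal weights, analytically continue on $G$ to a continuous family $\omega_{\ell,\pm}$ parametrised by $\ell > 1/2$; the endpoint $\ell = 1/2$ is the limit-of-discrete-series that serves as the common boundary with the principal series.

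Next I would compute the Harish-Chandra characters $\Theta(\sigma,\tau)$ and $\Theta(\ell)$ on the regular set. On an element $a_t$ of the split torus the principal-series character has the standard hyperbolic shape
\[
\Theta(\sigma,\tau)(a_t) = \frac{e^{i\sigma t}e^{2\pi i\tau t} + e^{-i\sigma t}e^{-2\pi i\tau t}}{|e^{t/2}-e^{-t/2}|},
\]
with $\tau$ entering through the central character; an analogous expression for the combined discrete-series character holds on the elliptic regular set of the lift $\widetilde K$. Both come from Frobenius-type calculations once the representations are realised concretely.

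The analytic core is the inversion. Writing $f(e)$ against the operator-valued Fourier transform and matching with the character formulas reduces the theorem to identifying the correct Plancherel densities. The factor $\sigma[\Re\tanh\pi(\sigma+i\tau)]$ arises as the inverse of a Harish-Chandra $c$-function: this $c$-function is a ratio of gamma functions which, after Weyl-symmetrisation and use of $\Gamma(z)\Gamma(1-z) = \pi/\sin\pi z$, collapses to $\Re\tanh\pi(\sigma+i\tau)$, while the leading $\sigma$ is the Jacobian of the substitution $q = \sigma^2 + 1/4$. The discrete-series weight $\ell - 1/2$ is the formal degree in this normalisation; unlike the $\SL_2(\R)$ case, it varies continuously in $\ell$ because $K$ is non-compact, removing the integrality constraint on the parameter.

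The main obstacle will be the analytic bookkeeping near the limit-of-discrete-series point $q = 1/4$, where the two series meet: one must justify that the density $\sigma[\Re\tanh\pi(\sigma+i\tau)]$ is the correct continuous-spectrum weight uniformly in $\tau$ and that no residual boundary term from the discrete series needs to be added, which amounts to a delicate residue/contour argument shifting the Mellin-Barnes contour past the poles of the $c$-function. This is precisely the technical content of \cite{P}, and for the purposes of the present article I would invoke it rather than reproduce the calculation in full.
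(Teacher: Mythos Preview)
The paper does not prove this theorem at all: it is stated as Puk\'anszky's Plancherel formula and attributed to \cite{P}, with the surrounding text making clear that it is being quoted as background (``We begin with the Plancherel formula of Puk\'anszky \cite{P}\dots'' followed immediately by ``This is a \emph{measure-theoretic} statement. We need a more precise statement in topology.''). So there is nothing to compare your proposal against; the paper simply cites the result.

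Your sketch is a reasonable high-level description of how Puk\'anszky's argument is organised, and you yourself conclude by saying you would invoke \cite{P} rather than reproduce it. That is exactly what the paper does, only without the preamble. For the purposes of this article the entire sketch is superfluous: the theorem functions as imported input, not as something to be re-derived. If you intend your write-up to match the paper, replace the sketch with a one-line attribution to \cite{P}. (As a minor aside, the explicit character formula you wrote on the split torus has the $\tau$-dependence entering in a way that does not look right---$\tau$ governs the central character, and elements $a_t$ of the split part are not central---so if you ever do flesh this out you will want to revisit that expression.)
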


This is a \emph{measure-theoretic} statement.  We need a more precise statement in topology.

As $\ell\to1/2$ the discrete series representations $(\ds_{\ell,\pm},\omega_{\ell,\pm})$ tend to limits $(\ds_{1/2,\pm},\omega_{1/2,\pm})$, which are not strictly speaking elements of the discrete series. The representation $(V_{1/4,1/2},\pi_{1/4,1/2})$ in the principal series, which we call the \emph{limit-of-discrete-series} for $G$, is the direct sum of the two representations $(\ds_{1/2,+},\omega_{1/2,+})$ and $(\ds_{1/2,-},\omega_{1/2,-})$, see Eqn.(2.4) in \cite[p.40]{KM}.  (Note that all other elements of the principal series are irreducible.)


We will define the \emph{parameter space} $\cZ$ to be the union of the sets
\[
\{q\in \R: q\leq 1/4\}
\]
\[
\{(q,\tau)\in \R\times \R : q\geq 1/4, 0\leq \tau\leq 1\}
\]
with identification of the point $1/4$ in the first set with $(1/4,1/2)$ in the second, and with identification of  $(q,0)$ with $(q,1)$ for all $q\geq 1/4$.

The $G$-Hilbert spaces $V_{q,\tau}$ form a continuous field of Hilbert-spaces over $q\geq 1/4, 0\leq \tau\leq 1$. We extend this to a continuous field $V_*$ of $G$-Hilbert-spaces over $\cZ$ by defining
\[
V_q=\ds_{\ell,+}\oplus \ds_{\ell,-}
\]
where $q=\ell(1-\ell)$ noting that
\[
V_{1/4} = V_{1/4,1/2}, \quad V_{q,0} = V_{q,1} \quad \quad \forall q \geq 1/4.
\]
The Casimir operator on the $G$-modules $V_q$ and $V_{q,\tau}$ is precisely the multiplication by the parameter $q$.

We have the following structure theorem.

\begin{thm}\label{Fourier transform of C^*_r(G)}   Let  $\fA$ denote the reduced $C^*$-algebra $C^*_r(G)$.   The Fourier transform $f \mapsto \widehat{f}$ induces an isomorphism of 
$\fA$  onto the $C^*$-algebra
\[
\Ahat:=\{F \in C_0(\cZ,\fK(V_*)) : F(q)\ds_{\ell,+} \subset \ds_{\ell,+},  F(q)\ds_{\ell,-}\subset \ds_{\ell,-} 
\;\mathrm{if} \; q \leq 1/4\}.
\]
\end{thm}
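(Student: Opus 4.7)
My plan is to construct the Fourier transform on test functions, verify that it takes values in $\Ahat$, and then extend by continuity to $\fA$ and prove bijectivity. For $f\in C_c^\infty(G)$ and $z\in\cZ$, set $\widehat{f}(z)=\pi_z(f)=\int_G \pi_z(g)f(g)\,dg$, where $\pi_z$ denotes whichever representation is attached to the parameter $z$ (principal, discrete, or limit-of-discrete). Smoothness and compact support of $f$ ensure that each $\widehat{f}(z)$ is trace class, in particular compact, on $V_z$; the same hypotheses give the Schwartz-type decay in $z$ (standard, via integration against powers of the Casimir) needed to ensure $\widehat{f}\in C_0$.

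The structural content of the theorem is that $\widehat{f}$ lies in $\Ahat$, so the crucial step is continuity across the three identifications built into $\cZ$. Continuity within either piece is immediate from Puk\'anszky's explicit realisations. The identification $(q,0)\sim(q,1)$ is harmless since $\tau$ parametrises a character of the infinite cyclic centre with natural period $1$, so $V_{q,0}$ and $V_{q,1}$ are canonically unitarily equivalent. The attachment at $1/4\sim(1/4,1/2)$ is the delicate point: one needs the field $\ds_{\ell,+}\oplus\ds_{\ell,-}$ over $\ell\in(1/2,\infty)$ to extend continuously to $V_{1/4,1/2}$ at its endpoint, which is exactly the reducibility of the limit-of-discrete-series noted after Theorem~\ref{PT}. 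The invariance condition $F(q)\ds_{\ell,\pm}\subset \ds_{\ell,\pm}$ for $q\leq 1/4$ is then automatic, since $V_q = \ds_{\ell,+}\oplus \ds_{\ell,-}$ as $G$-modules and $\pi_z(f)$ is $G$-intertwining.

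Having established that $f\mapsto\widehat{f}$ is a $\ast$-homomorphism from $C_c^\infty(G)$ into $\Ahat$, I extend to $\fA$ using the Plancherel formula of Theorem~\ref{PT}: the sup norm of $\widehat{f}$ on $\cZ$ is dominated by the convolution norm on $L^2(G)$. Injectivity on $\fA$ also follows from Plancherel, since $\widehat{f}=0$ forces $(f\ast f^{\ast})(e)=0$, hence $f=0$ in $\fA$.

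The main obstacle is surjectivity. I would prove density of the image via a Stone--Weierstrass argument for continuous fields of $C^*$-algebras: the image is a self-adjoint $C_0(\cZ)$-submodule of $\Ahat$ (with the action of $C_0(\cZ)$ coming from continuous functions of the Casimir), and fibrewise at each $z$ it surjects onto the relevant subalgebra of $\fK(V_z)$ by irreducibility (or, at $q\leq 1/4$, the direct-sum reducibility) of $\pi_z$. Since an injective $\ast$-homomorphism between $C^*$-algebras has closed range, density then implies surjectivity. The technical heart of this step is again at the gluing point $q=1/4$, where one must check that the decomposition constraint imposed by the reducibility of $V_{1/4,1/2}$ really is the only obstruction to hitting every element of $C_0(\cZ,\fK(V_*))$; this is what forces the condition placed in the definition of $\Ahat$ and nothing more.
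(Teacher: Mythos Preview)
Your approach is sound in outline but differs substantially from the paper's. The paper does not build the isomorphism from scratch: it invokes the theorem of Kraljevi\'{c} and Mili\v{c}i\'{c} \cite{KM}, who had already identified the Fourier transform of the \emph{full} $C^*$-algebra $C^*_{max}(G)$ with an algebra of compact-operator-valued functions on a larger parameter space $\cZ_{max}$ (which includes the complementary series). The paper then simply observes that passing to $C^*_r(G)$ amounts to restricting to the support of the Plancherel measure; this discards the complementary series and cuts the parabolic boundary down to the pair of discrete-series branches, which are then folded together to produce $\cZ$ with exactly the diagonality constraint in the statement. So the paper's proof is essentially a reduction to existing literature plus a careful bookkeeping of which representations survive and how they glue, whereas you are reproving the structure theorem directly via Plancherel and Stone--Weierstrass.

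Your direct route is legitimate, but there is one genuine gap. In the Stone--Weierstrass step you assert that the image is a $C_0(\cZ)$-submodule ``with the action of $C_0(\cZ)$ coming from continuous functions of the Casimir.'' This cannot be right as stated: the Casimir detects only the parameter $q$, so functional calculus in the Casimir yields an action of $C_0(\R)$, not of $C_0(\cZ)$, and in particular cannot separate two points $(q,\tau)$ and $(q,\tau')$ on the same circle in the principal series. To repair this you must also use the infinite cyclic centre of $G$: a central element acts on $V_{q,\tau}$ by the scalar $e^{2\pi i \tau}$, and together with the Casimir this does separate $\cZ$ and gives the required $C_0(\cZ)$-module structure on the image. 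With that correction in place, fibrewise surjectivity (from irreducibility and liminality of the tempered representations) and point-separation feed into the Dixmier form of Stone--Weierstrass for continuous fields, and your argument goes through.
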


\begin{proof}
In \cite{KM} Kraljevi\'{c} and Mili\v{c}i\'{c} gave a full description of the Fourier transform of the full $C^*$-algebra $C^*_{max}(G)$. Paraphrasing their work the algebra can be identified with an algebra of compact operator functions on a space  $\cZ_{max}$.

\begin{figure}[htbp] 
   \centering
   \includegraphics[width=3in]{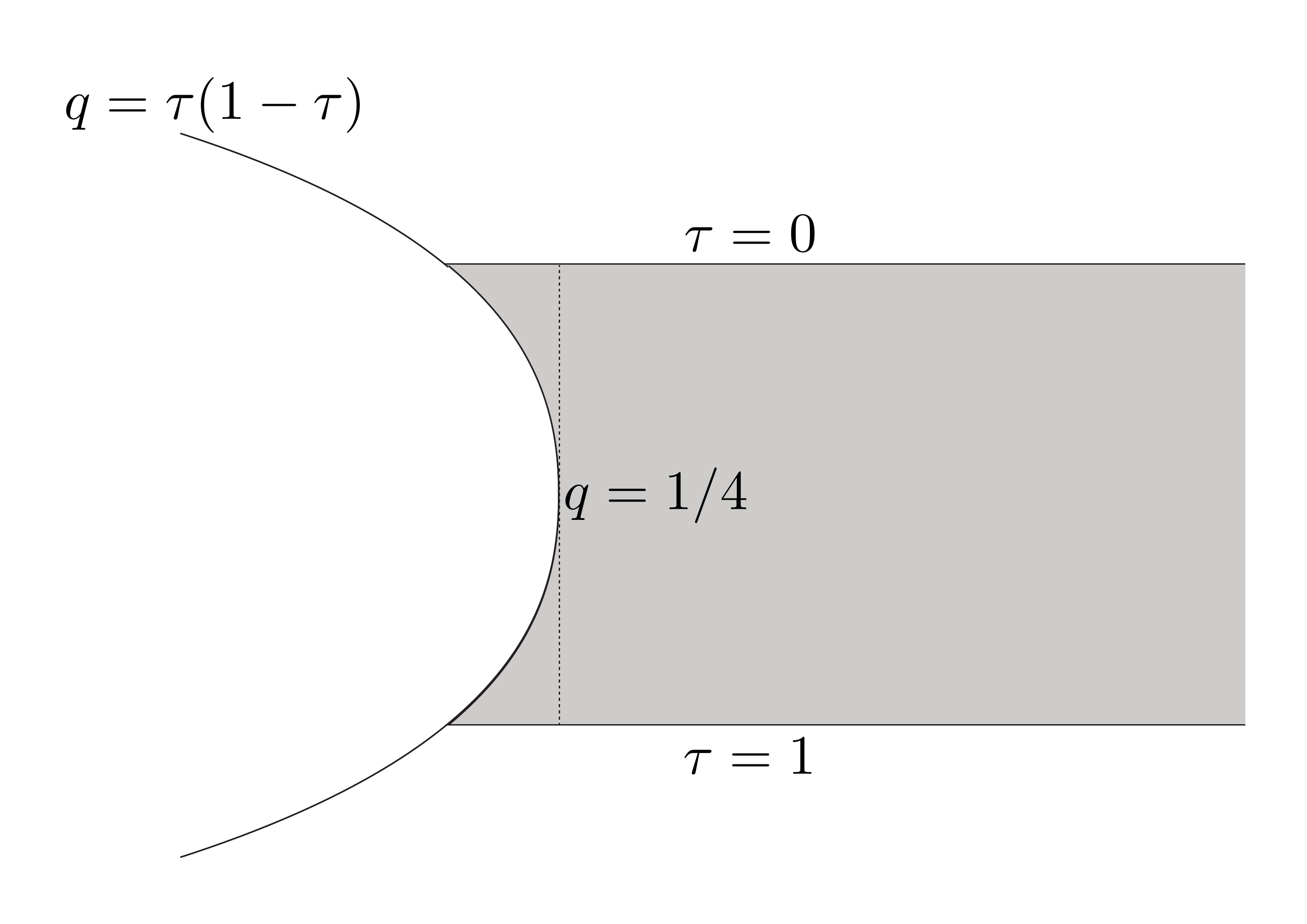} 
   \caption{The parameter space $\cZ_{max}$.}
   \label{fig:example}
\end{figure}
 

The conditions that the compact operator valued functions are required to satisfy capture all the equivalences among the unitary representations of $G$, see \cite[p. 40]{KM}. The equivalencies all occur around the boundary of the shaded region. In particular:
\begin{itemize}
\item there is a unitary identification of the operators along the line $\tau=0$ with those along $\tau=1$;
\item the operators are required to respect a decomposition of the Hilbert space on the parabola $q=\tau(1-\tau)$ for $0<\tau<1$ arising from the fact that these points of the parameter space correspond to pairs of inequivalent irreducible unitary representations;
\item at $(0,0)$ and $(0,1)$ the operators respect a decomposition into $3$ subspaces corresponding to three inequivalent irreducible unitary representations, one of which is trivial.

\end{itemize}

 
 
 For the reduced $C^*$-algebra we need to take those points of the parameter space and those subspaces of the Hilbert space which correspond to the support of the Plancherel measure, which we identify using Theorem \ref{PT}. The complementary series has Plancherel measure $0$ hence we only get points with $q\geq 1/4$ or $q=\tau(1-\tau)$. Moreover, along the parabola we now get a single irreducible representation $(\ds_{\tau,+},\omega_{\tau,+})$ for $\tau>1/2$ and $(\ds_{1-\tau,-},\omega_{1-\tau,-})$ for $\tau<1/2$ , while at the  limit-of-discrete-series, with parameter $(1/4,1/2)$, we obtain the direct sum of  $(\ds_{1/2,+},\omega_{1/2,+})$ with $(\ds_{1/2,-},\omega_{1/2,-})$.
\begin{figure}[h] 
   \centering
   \includegraphics[width=3in]{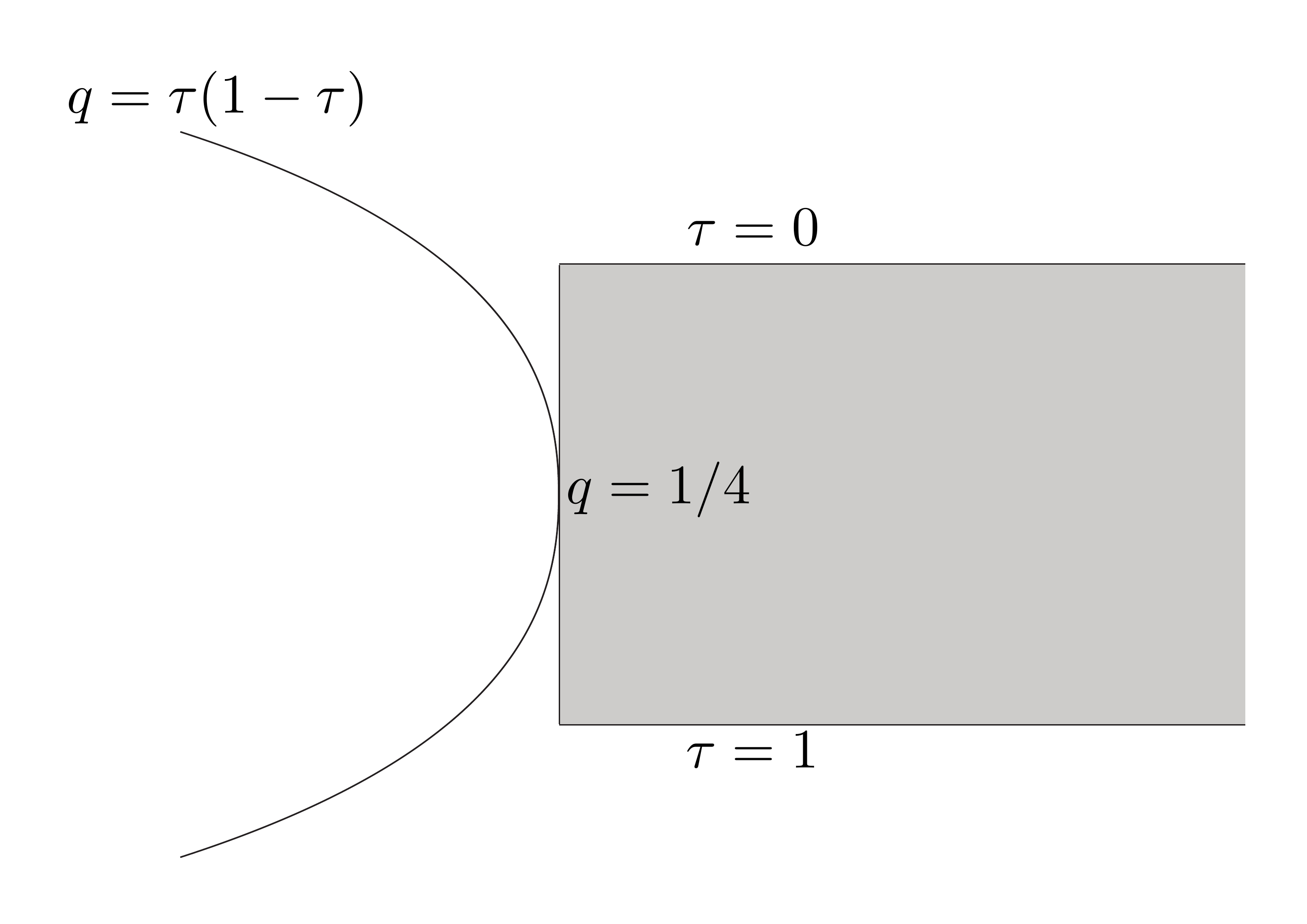} 
   \caption{The parameter space $\cZ_{red}$.}
   \label{fig:example2}
\end{figure}

 This identifies the Fourier transform of the reduced $C^*$-algebra with an algebra of compact operator valued functions on the space $\cZ_{red}$, satisfying:
 
 \begin{itemize}
\item As before, there is a unitary identification of the operators along the line $\tau=0$ with those along $\tau=1$;
 \item along the branch of the parabola $\tau>1/2$ the compact operators have range in  $\ds_{\tau,+}$, while  along the branch $\tau<1/2$ the operators have range in $\ds_{\tau,-}$;
 \item at $\tau=1/2$ the operators respect the decomposition $\ds_{1/2,+}\oplus \ds_{1/2,-}$.
 \end{itemize}

 \begin{figure}[h] 
   \centering
   \includegraphics[width=4.7in]{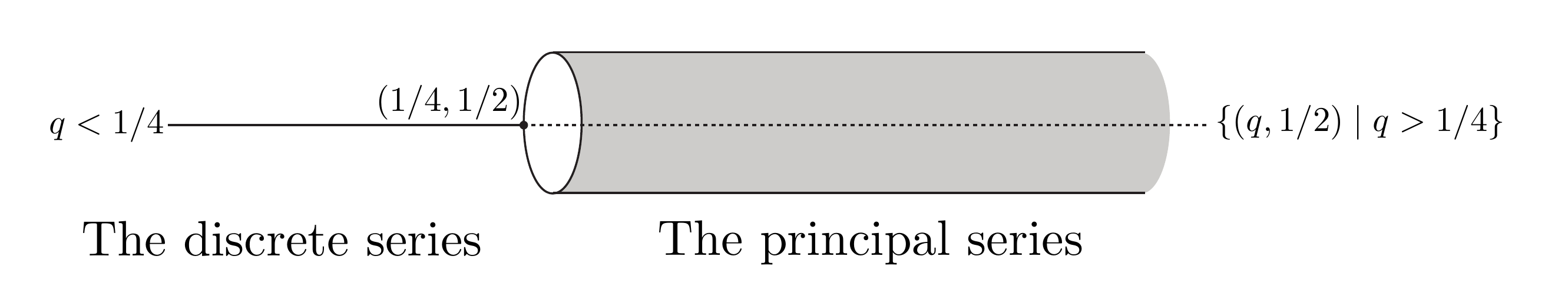} 
   \caption{The parameter space $\cZ$. Note that $q$ is the parameter defining the Casimir operator and the marked point is the limit-of-discrete-series.}
   \label{fig:example2}
\end{figure}

 The parameter space $\cZ$ is obtained from $\cZ_{red}$ by identifying the two branches of the parabola and by identifying the line $\tau=0$ with the line $\tau=1$. The first identification allows us to view the pairs of operators as a single operator respecting the decomposition as required by the theorem, while the second identification corresponds to the unitary equivalence of the representations on these two lines. This yields the required isomorphism.
\end{proof}



   Note that the Jacobson topology on the primitive ideal spectrum of $\fA$ is exactly right: it has a double point at $q = 1/4 \in \cZ$, where the  unitary representation $\pi_{1/4,1/2}$ is reducible and is Hausdorff away from this point.

We observe that the algebra $\Ahat$ is (strongly) Morita equivalent to the algebra
\[
\fB:=\{F \in C_0(\cZ,M_2(\C)) : F(q)\text{ is diagonal if} \; q \leq 1/4\}.
\]
One way to see this is to note that $\Ahat\cong \fB\otimes \fK$ however it is instructive to consider the explicit bimodules yielding the Morita equivalence. 

We introduce the notation $\C^{2}_+$ and $\C^{2}_-$ for the subspaces $\C\oplus 0$ and $0\oplus \C$ in $\C^2$. We now form the submodule $\cE$ of 
$C_0(\cZ,V_*\otimes \C^2)$ consisting of all functions $F$ such that for $q=\ell(1-\ell)\leq 1/4$, we have 
\[
F(q)\in (\ds_{\ell,+}\otimes \C^{2}_+) \oplus (\ds_{\ell,-}\otimes \C^{2}_-).
\]

The module $\cE$ can be equipped with two (pointwise) inner products. Firstly we have a $\fB$-valued inner product, which is to say a pointwise $M_2(\C)$-valued inner product satisfying the required diagonality condition. For $F=F_1\otimes F_2$, $G=G_1\otimes G_2$ the inner product is defined to be 
\[
\langle F,G\rangle_{\fB}=\langle F_1(z),G_1(z)\rangle_{V_z} \,F_2(z)\langle G_2(z),-\, \rangle_{\C^2}.
\]
Secondly we have an $\Ahat$ valued inner product, which pointwise takes values in $\fK(E)$. For $F=F_1\otimes F_2$, $G=G_1\otimes G_2$ the inner product is defined to be 
\[
\langle F,G\rangle_{\Ahat}=\langle F_2(z),G_2(z)\rangle_{\C^2} \,F_1(z)\langle G_1(z),-\, \rangle_{V_z}.
\]
The Hilbert modules obtained by equipping $\cE$ with these two inner products effect a Morita equivalence between the algebras $\Ahat$ and $\fB$.

We remark that a field of operators on the field $V_*$ of Hilbert spaces can naturally be regarded as an adjointable operator on $\cE$ with the $\fB$-valued inner product.

\section{The Dirac operator}\label{the K-cycle}   

Let $\fg$ denote the Lie algebra $\fsl_2(\R)$, let $U(\fg)$ denote the universal enveloping algebra of $\fg$, and let $C(\fg)$ denote the Clifford algebra of $\fg$ with respect to the negative definite quadratic form on $\fg$.   Let $X_0, X_1, X_2$ denote an orthonormal basis in $\fg$.   Note that the notation in 
\cite[(1.1)]{P} is $l_k = X_k$.   

Following the algebraic approach in \cite[Def. 3.1.2]{HP} the \emph{Dirac operator} is the element of the algebra $U(\fg) \otimes C(\fg)$ given by
\begin{align*}
D  = X_0 \otimes c(X_0)  + X_1 \otimes c(X_1) + X_2 \otimes c(X_2)
\end{align*}
where $c(X_k)$ denotes Clifford multiplication by $X_k$.

    Let
\[
\sigma_0 = \begin{pmatrix}
1 & 0\\
0 & -1
\end{pmatrix}, \quad \sigma_1 = \begin{pmatrix}
0 & 1\\
1 & 0
\end{pmatrix}, \quad \sigma_2 = \begin{pmatrix}
0 & i\\
-i & 0
\end{pmatrix}
\]
and set
\[
c(X_k) = i\sigma_k, \quad \quad \quad k = 0,1,2
\]
Then we have 
\[
c(X_k)^2  = -1 
\]
for all $k = 0,1,2$.

We have 

\begin{align*}\label{Dirac}
D & = i(X_0 \sigma_0  + X_1 \sigma_1  + X_2 \sigma_2 )\\
& = i\left(\begin{array}{cc}X_0 & X_1 + iX_2\\ X_1 - iX_2 & -X_0 \end{array} \right)
\end{align*}

The operator $D$ acts on 2-spinor fields in the following way: the elements of the Lie algebra $\fg$ give rise to \emph{right}-invariant vector fields on $G$ and in this way $X_j,j=0,1,2$ form differential operators on scalar fields. The matrix then acts by differentiating the components of a spinor field.

Viewing a compactly supported 2-spinor field as a pair of scalar valued functions on the group $G$, it is an element of $C_c(G)\oplus C_c(G)\subseteq \fA\oplus \fA$. In this way the Dirac operator $D$ gives rise to an unbounded adjointable operator on $\fA\oplus \fA$ viewed as a Hilbert module over $\fA$.

Let now $\pi$ be a unitary representation, in the principal series or the discrete series of $G$, on a Hilbert space $V_{\pi}$.   The infinitesimal generators $H_0, H_1, H_2$, which act on the Hilbert space $V_{\pi}$,  are determined by the following equation \cite[p.98]{P}:
\[
\exp( - itH_k) = \pi(\exp(tX_k)) \quad \quad \quad \forall t \in \R, k = 0,1,2.
\]

Let us now recall the parameter space $\cZ$ from $\S$\ref{reduced C^*-algebra} and corresponding continuous field of $G$-Hilbert-spaces over $\cZ$:
\[
\{V_q : q \leq 1/4\}, \qquad
\{V_{q,\tau} : q \geq 1/4, 0 \leq \tau \leq 1\}.
\]

On each of the Hilbert spaces $V_q$ and $V_{q,\tau}$ we therefore have three self-adjoint operators, namely $H_0$, $H_1$ and $H_2$.  These form a field of operators on the field of Hilbert spaces $V_*$.     The spectrum of $H_0$ is discrete with eigenvalues $m = \ell, \ell+1,\ell+2,\dots$ and $ m = -\ell,\-\ell-1,-\ell-2,\dots$ in the case that $q<1/4$ and with eigenvalues $m \in \tau+\Z$ for $q\geq 1/4$. Each eigenvalue has multiplicity $1$ and we let $f_m$ be an orthogonal basis of eigenvectors of $H_0$ so that
\[H_0f_m = mf_m.\]

\medskip

Following \cite[p.100]{P}, we define 
\[
H_+ = H_1 + iH_2, \quad \quad H_- = H_1 - i H_2
\]

In addition, we have the following equations
\[
\begin{split}
H_+  f_m & = (q+m(m+1))^{1/2}f_{m+1}\\
H_- f_m & = (q+m(m-1))^{1/2}f_{m-1}
\end{split}
\]
which hold for all $m$ when $q\geq 1/4$ and where the first equation holds for all $m\neq -\ell$, the second for all $m\neq \ell$ when $q<1/4$. The special cases of $H_+ f_{-\ell}$ and $H_- f_\ell$ are both zero.

By analogy with the Dirac operator $D$ above, we construct a field of self-adjoint operators
\begin{align}
\Hdirac=
\begin{pmatrix}
H_0 & H_1 + iH_2\\
H_1 - iH_2  & - H_0
\end{pmatrix}
\end{align}
on the field of Hilbert spaces $V_*\oplus V_*$. Since the algebra $\Ahat$ consists of fields of compact operators on $V_*$, the operator $\Hdirac$ can also be thought of as acting on $\Ahat\oplus\Ahat$ by composition. (One must additionally note that the operators $H_j,j=0,1,2,$ respect the decomposition of $V_q$ as $\ds_{\ell,+}\oplus \ds_{\ell,-}$ for $q\leq 1/4$.)

Since the operator $D$ acts on $\fA\oplus \fA$ and we have an isomorphism from $\fA$ to $\Ahat$ given by the Fourier transform (Theorem \ref{Fourier transform of C^*_r(G)}), we obtain an operator $\hatD$ on $\Ahat\oplus\Ahat$. We will show that
$$\hatD=-\Hdirac.$$ 
It suffices to show that the isomorphism $\fA\cong \Ahat$ takes the differential operator $iX_j$ to $-H_j$ for each $j$.

For $f$ a smooth compactly supported function on $G$ the Fourier transform of $f$ is defined by
$$\hat f(\pi)=\int f(g)\pi(g)\,dg.$$
Since $H_j$ is the infinitesimal generator of the 1-parameter group $\exp( - itH_j) = \pi(\exp(tX_j))$ we have $H_j=i\frac d{dt}\pi(\exp(tX_j))|_{t=0}$. Hence
\begin{align*}
-H_j\hat f(\pi)&=-\int f(g)H_j\pi(g)\,dg\\
&=-\int f(g)i\frac d{dt}\pi(\exp(tX_j))|_{t=0}\pi(g)\,dg\\
&=-i\frac d{dt}\int f(g)\pi(\exp(tX_j)g)\,dg|_{t=0}\\
&=-i\frac d{dt}\int f(\exp(-tX_j)g')\pi(g')\,dg'|_{t=0} \quad\text{by left invariance of $dg$}\\
&=-i\int \frac d{dt}f(\exp(-tX_j)g')|_{t=0}\pi(g')\,dg'\\
&=i\int X_j(f)\pi(g')\,dg'\\
&=i\widehat{X_j(f)}(\pi)
\end{align*}
since $X_j$ is a right-invariant vector field. This establishes that the Fourier transform takes $iX_j$ to $-H_j$ and hence that $\hatD=-\Hdirac$.

This exhibits $\hatD$ as  a field of self adjoint operators on our parameter space $\cZ$ as expected from the $G$-invariance of $D$. The spectrum of $\hatD$ is the union of the spectra of the local operators. Each local operator has a discrete spectrum which we will  describe in some detail.

Recall that $\Hdirac$ is defined on the field $V_*\oplus V_*$, and a crucial observation at this point is the emergence of two dimensional invariant subspaces for $\Hdirac$ at each point of the parameter space $\cZ$.

Each such subspace $E_m$ is spanned by a pair of vectors
$\begin{pmatrix} f_m \\ 0 \end{pmatrix}$ and $\begin{pmatrix} 0 \\  f_{m-1} \end{pmatrix}$,
where $m$ is in the set $\tau + \Z$ for $q \geq 1/4$ and $m =  \ell +1, \ell +2, \ldots$ or $- \ell, -\ell - 1, \ldots$ for $ q < 1/4$. We have the following equations
\[
\Hdirac\begin{pmatrix} f_m \\ 0 \end{pmatrix}=
\begin{pmatrix}
H_0 & H_+\\
H_- & - H_0
\end{pmatrix}
\begin{pmatrix} f_m \\ 0 \end{pmatrix}
= 
\begin{pmatrix}
mf_m \\
(q+m(m-1))^{1/2}f_{m-1}
\end{pmatrix}
\]
and
\[
\Hdirac
\begin{pmatrix} 0 \\  f_{m-1} \end{pmatrix}
=
\begin{pmatrix}
H_0 & H_+\\
H_- & - H_0
\end{pmatrix}
\begin{pmatrix} 0 \\  f_{m-1} \end{pmatrix}
=
\begin{pmatrix}
(q+m(m-1))^{1/2}f_{m}\\
-(m-1)f_{m-1}
\end{pmatrix}.
\]
With respect to this basis, the operator $\Hdirac$
is given by the following symmetric matrix 
\begin{equation}\label{thematrix}
\begin{pmatrix}
m & (q+m(m-1))^{1/2} \\
(q+m(m-1))^{1/2} & -(m-1)
\end{pmatrix}. 
\end{equation}

This symmetric matrix has the following eigenvalues 
\begin{equation}\label{eigenvalues}
\lambda = \frac{1}{2}\pm \sqrt{1/4+q +2m(m-1)}. 
\end{equation}

In the case that $q \geq 1/4$ the subspaces $E_m$ for $m\in\tau+\Z$ span the whole of $V_{q,\tau}$.    However, for $ q < 1/4$ there are a further two $1$-dimensional subspaces spanned by the vectors 
\[\begin{pmatrix}
f_\ell \\ 0 
\end{pmatrix}, \begin{pmatrix}
0 \\ f_{-\ell}
\end{pmatrix}
\]
These subspaces are invariant since $H_-(f_{\ell}) = 0$ and $H_+(f_{-\ell}) = 0$, and moreover, they span a $2$-dimensional eigenspace with eigenvalue $\ell$.

Now consider the case $q=1/4, \tau=1/2$. Taking $m=1/2$ the matrix in (\ref{thematrix}) is $\frac{1}{2}I$ hence $\Hdirac$ has a $2$-dimensional eigenspace with eigenvalue $1/2$. This matches up with the above $2$-dimensional eigenspace as $q\rightarrow 1/4^-$ and correspondingly $\ell\rightarrow 1/2^+$, reflecting the fact that $\pi_{1/4,1/2}$ is the limit-of-discrete-series representation of $G$.

%

%
%
%
\begin{defn}
Let $G$ be a second countable locally compact group and $\mathcal{E}$ a $C^*_r(G)$-Hilbert module.  Let $(V,\pi)$ be a representation whose irreducible constituents are in the reduced dual of $G$. For $T$ an adjointable operator on $\mathcal{E}$, the \emph{localisation of $T$ at $\pi$} is the operator $T\otimes 1$ on $\mathcal{E}\otimes_{C^*_r(G)}V$.  The \emph{localisation of the spectrum of $T$ at $\pi$} is the spectrum of the localisation of $T$.
\end{defn}

The localisation of our Dirac operator $D$ at the representation $\pi_{q,\tau}$ corresponds to taking the fibre of $\hatD$ at the point $(q,\tau)$ in $\cZ$. On the other hand the fibre of $\hatD$ at a point $q=\ell(1-\ell)<1/4$ in $\cZ$ is the direct sum of the localisations of $D$ at $\omega_{\ell,\pm}$.

Equation (\ref{eigenvalues}), for the appropriate values of $m$, describes most of the spectrum for the fibre of $\Hdirac$. The exception is provided by a further $2$-dimensional eigenspace with eigenvalue $\ell$ when $q<1/4$. Indeed for these values of $q$ each eigenvalue given by Equation \ref{eigenvalues} also has multiplicity $2$: the values $m=(\ell+k)$ and $m=\ell-(k-1)$ (for $k= 1,2, \ldots$) yielding the same eigenvalue where the first occurs in the localisation over $\omega_{\ell,+}$ and the second over $\omega_{\ell,-}$ .

As $\hatD=-\Hdirac$ we  thus obtain the following description of the localised spectra of $D$.

\begin{figure}[h] 
   \centering
   \includegraphics[width=1.5in]{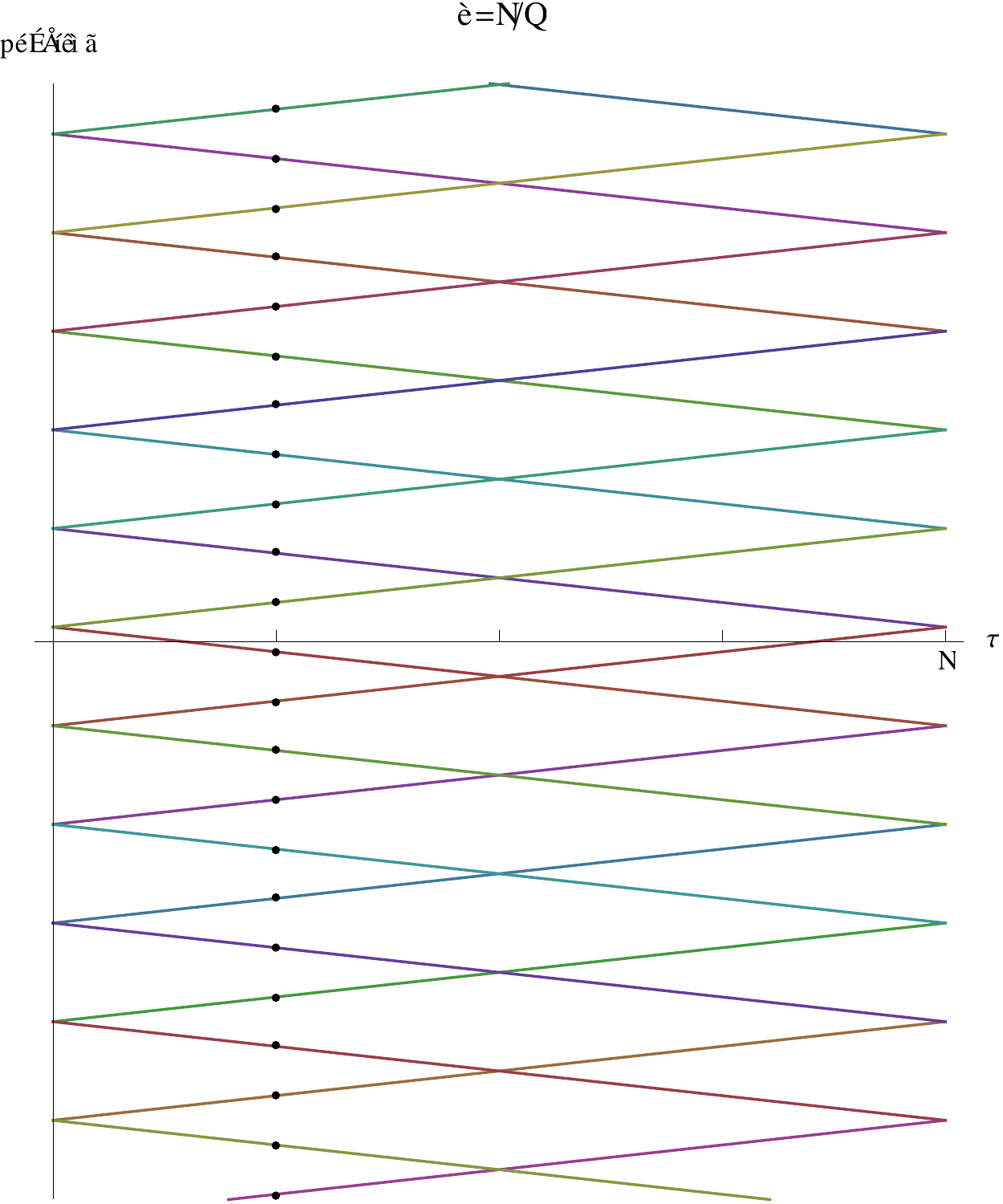} \includegraphics[width=1.5in]{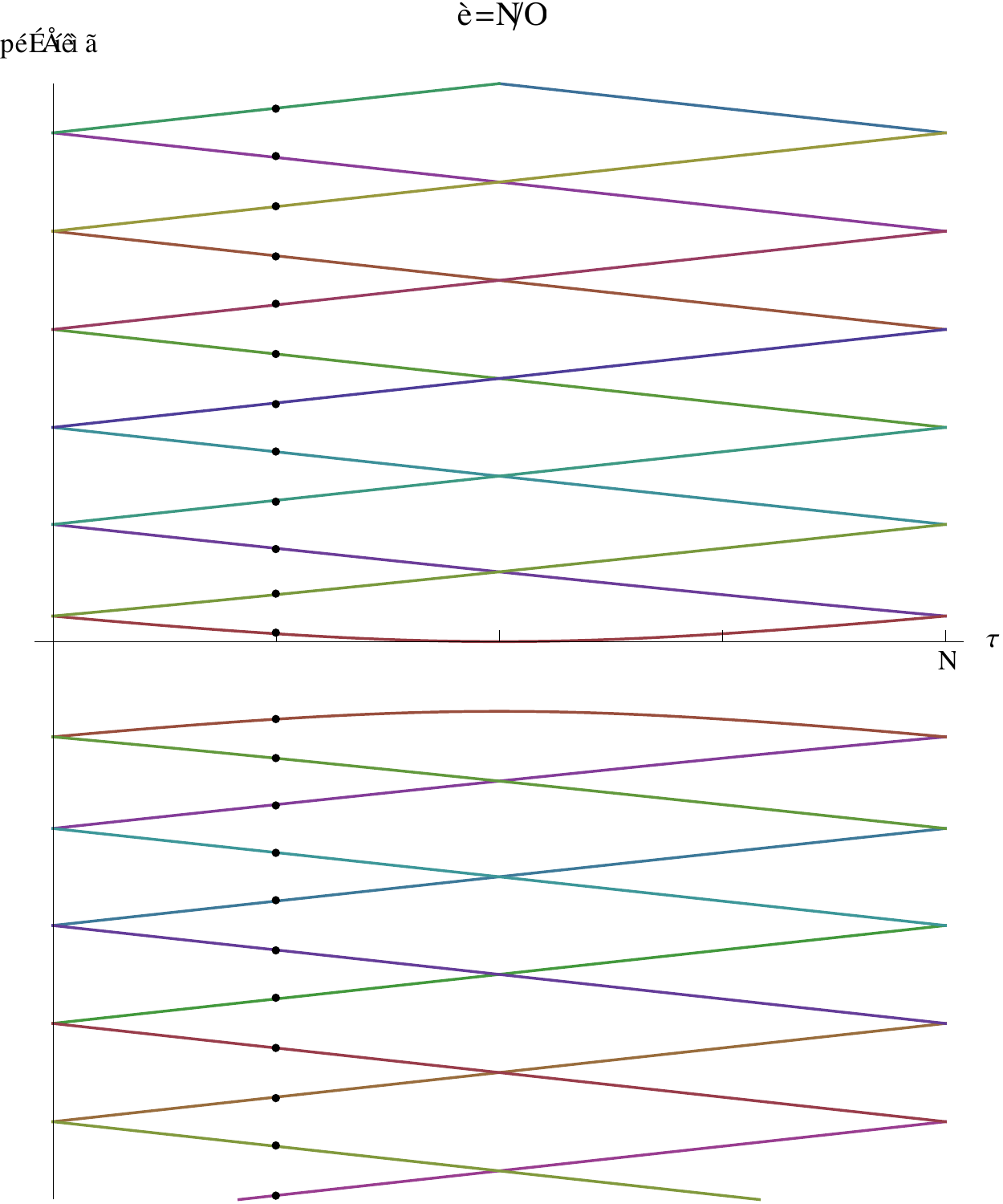} \includegraphics[width=1.5in]{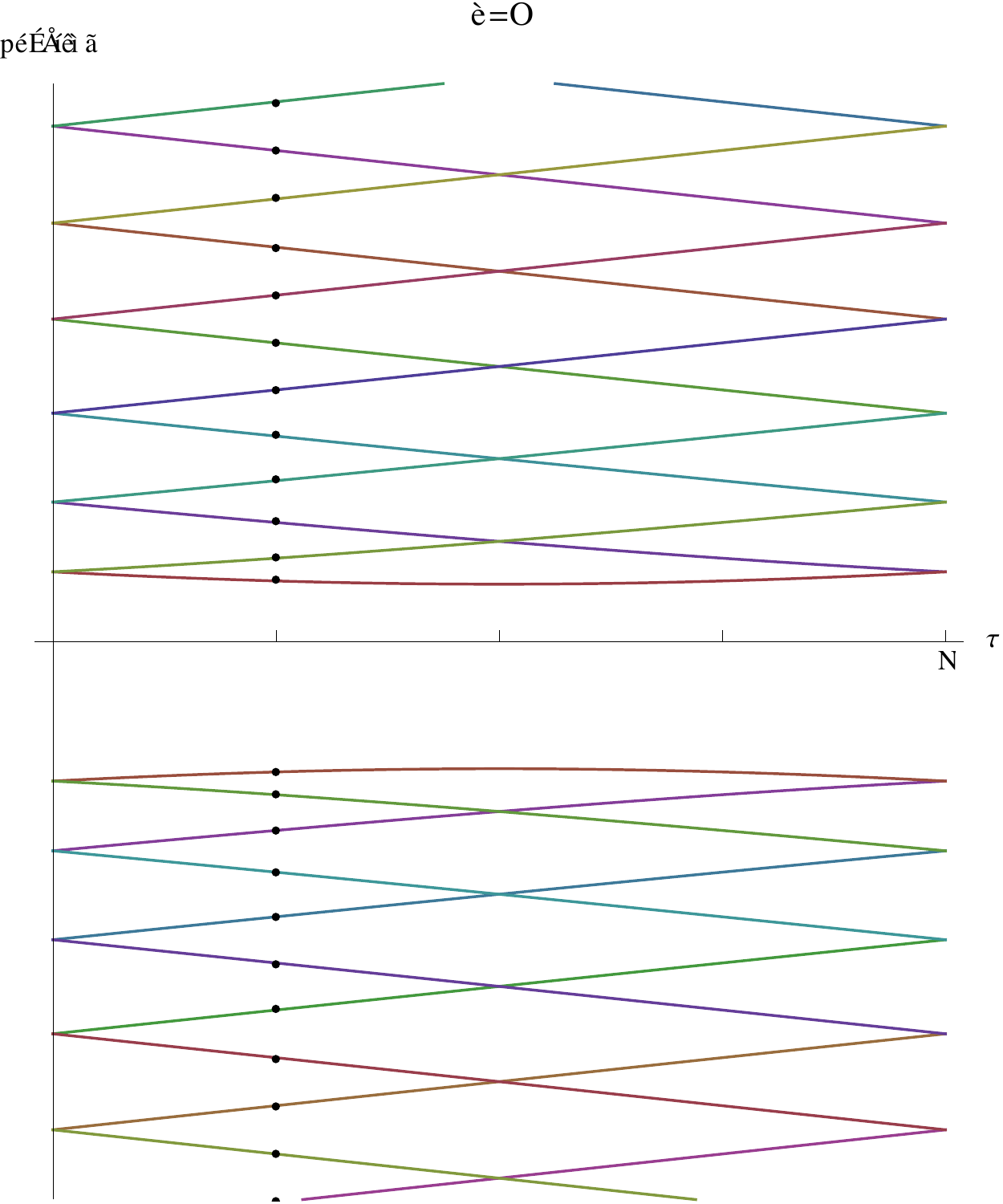} 
   \caption{The localised spectra of $D$ over the circles of representations  $\pi_{q,\tau}$ at $q=1/4,1/2,2$, and corresponding to $k= -6,  \ldots, 6$.}
   \label{fig:spectra1}
\end{figure}
\begin{thm}\label{Theorem_3.2}
The spectrum of the Dirac operator $D$ localised at the representation $\pi_{q,\tau}$ is 
\[
\left\{-\frac{1}{2}\pm \sqrt{1/4+q +2(\tau+k)(\tau+k-1)}\, :\, k\in \Z\right\}.
\]
The spectrum of the Dirac operator localised at $\omega_{\ell,+}$ agrees with that at  $\omega_{\ell,-}$  and is given by the formula 
\[
\{-\ell\} \cup\left\{-\frac{1}{2}\pm \sqrt{1/4+\ell(1-\ell) +2(\ell+k)(\ell+k-1)}\, :\, k\ = 1,2,\dots \right\}.
\]


\end{thm}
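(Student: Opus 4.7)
The plan is to assemble the spectra directly from the spectral analysis of $\Hdirac$ already carried out in this section, using the identification $\hatD = -\Hdirac$. Localising $D$ at $\pi_{q,\tau}$ amounts to taking the fibre of $\hatD$ at $(q,\tau)\in\cZ$, while localising $D$ at $\omega_{\ell,\pm}$ picks out the corresponding summand of the fibre at $q=\ell(1-\ell)<1/4$. In each case the remaining task is to read off the eigenvalues of $\Hdirac$ on the appropriate fibre.

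For the principal series, the first step is to decompose $V_{q,\tau}\oplus V_{q,\tau}$ as the orthogonal sum $\bigoplus_{k\in\Z}E_{\tau+k}$, using that $\{f_m:m\in\tau+\Z\}$ is an orthonormal eigenbasis for $H_0$ on $V_{q,\tau}$. On each $E_{\tau+k}$ the operator $\Hdirac$ is represented by the matrix (\ref{thematrix}) with $m=\tau+k$, whose eigenvalues are given by (\ref{eigenvalues}). Substituting $m=\tau+k$ and negating yields the first formula of the theorem.

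For the discrete series at $\omega_{\ell,+}$, whose Hilbert space has basis $\{f_m:m=\ell,\ell+1,\ldots\}$, the two-dimensional invariant subspaces $E_m$ exist only for $m=\ell+k$ with $k\geq 1$, and these contribute the eigenvalues from (\ref{eigenvalues}). The extreme vector $\begin{pmatrix}f_\ell\\0\end{pmatrix}$ spans an additional one-dimensional invariant subspace because $H_-f_\ell=0$, on which $\Hdirac$ acts by the scalar $\ell$ read off from the $(1,1)$ entry of (\ref{thematrix}). Collecting these eigenvalues and negating produces the second formula. To see that the spectra at $\omega_{\ell,+}$ and $\omega_{\ell,-}$ coincide I will invoke the symmetry $m(m-1)=(1-m)(-m)$, which leaves (\ref{eigenvalues}) invariant: applied to the basis $\{f_m:m=-\ell,-\ell-1,\ldots\}$ of $\ds_{\ell,-}$ and its associated $E_m$'s this reproduces exactly the same matrix eigenvalues as at $\omega_{\ell,+}$, while the one-dimensional invariant subspace spanned by $\begin{pmatrix}0\\f_{-\ell}\end{pmatrix}$ (invariant because $H_+f_{-\ell}=0$) again contributes the scalar $\ell$, this time via the $-(m-1)$ entry of (\ref{thematrix}) at $m=-\ell+1$.

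Since every ingredient has been set up in the preceding discussion, no step presents a serious obstacle; the main point requiring care is the bookkeeping of the extreme vectors $f_{\pm\ell}$, which generate the isolated one-dimensional eigenspaces contributing the eigenvalue $-\ell$ and which connect, as $q\to 1/4^-$, to the degenerate eigenspace of the limit-of-discrete-series representation.
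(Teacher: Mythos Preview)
Your proposal is correct and follows essentially the same approach as the paper: the theorem is stated there as an immediate consequence of the preceding decomposition of each fibre of $V_*\oplus V_*$ into the $\Hdirac$-invariant planes $E_m$ (with eigenvalues given by (\ref{eigenvalues})) together with the two extreme one-dimensional eigenspaces at $q<1/4$, followed by the sign change $\hatD=-\Hdirac$. Your use of the involution $m\mapsto 1-m$ (equivalently $m(m-1)=(1-m)(-m)$) to match the spectra at $\omega_{\ell,+}$ and $\omega_{\ell,-}$ is exactly the paper's observation that $m=\ell+k$ and $m=1-\ell-k=-\ell-(k-1)$ yield the same value of $m(m-1)$.
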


Note  that in the principal series ($q\geq 1/4$) the eigenvalues generically have multiplicity $1$. However for $\tau=0$ the eigenvalues have multiplicity $2$, while for $\tau=1/2$ the eigenvalues have multiplicity $2$ for $k\not=0$. The $k=0$ eigenvalues for $\tau=1/2$ are $ -1/2\pm \sqrt{q-1/4}$ and have multiplicity $1$ except when $q=1/4$, at the limit-of-discrete-series representation, where they coallesce. See  Figure \ref{fig:spectra1}.

\begin{figure}[h] 
   \centering
   \includegraphics[width=2.5in]{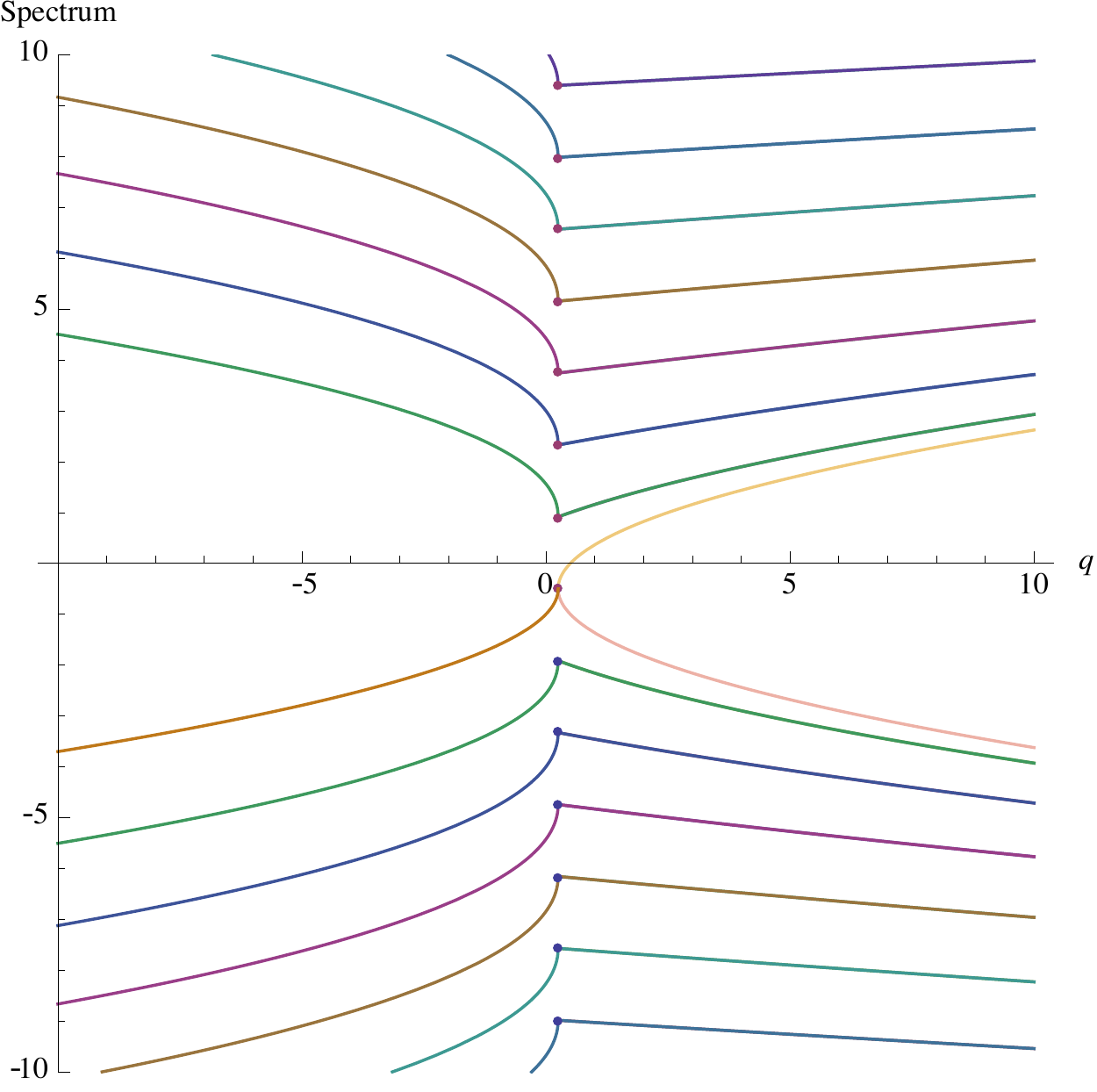} 
   \caption{The localised spectra of $D$ over the line of representations 
   \[
   \{\omega_{\ell,+}\oplus \omega_{\ell,-}: q=\ell(1-\ell)<1/4\}\cup\{\pi_{q,1/2}: q\geq 1/4\}.
   \]
   The cusps and bifurcation occur at $q=1/4$.}
   \label{fig:spectra2}
\end{figure}

Note from Figure \ref{fig:spectra2} that something very special occurs in the limit-of-discrete series when $q = 1/4$ and $\tau = \ell=1/2$.  The $2$-dimensional eigenspace with eigenvalue $-\ell$ along the branch $q<1/4$ splits at $q=1/4$ into two $1$-dimensional eigenspaces, with eigenvalues diverging to $\pm\infty$ as $q\rightarrow \infty$. Taking the upper branch allows the eigenvalue to range continuously from $-\infty$ to $\infty$ with the parameter $q$, which, as we shall see,  gives rise to the non-triviality of $D$ as a class in $K$-theory.

As noted above, the operator $\Hdirac$ can be thought of either as an operator on $\Ahat\oplus \Ahat$ or an operator on $V_*\oplus V_*$. Taking the latter view, the operator acts on the space $C_0(\cZ,V_*\oplus V_*)$ of continuous sections of $V_*\oplus V_*$ vanishing at infinity, which is a Hilbert module over $C_0(\cZ)$. Indeed there is an isomorphism of Hilbert modules
$$(\Ahat\oplus \Ahat)\otimes_{\Ahat}C_0(\cZ,V_*) \cong C_0(\cZ,V_*\oplus V_*)$$
and viewing $\Hdirac$ as an operator on $\Ahat\oplus \Ahat$, the corresponding operator on $C_0(\cZ,V_*\oplus V_*)$ is given by $\Hdirac \otimes 1$.

The algebra $\Ahat$ is contained in $C_0(\cZ,\fK(V_*))$ (or $C_0(\cZ,\fK)$ for short) and we can factorise the tensor product further as
$$(\Ahat\oplus \Ahat)\otimes_{\Ahat}C_0(\cZ,\fK)\otimes_{C_0(\cZ,\fK)}C_0(\cZ,V_*).$$
Hence viewing $\Hdirac$ as an operator on sections of $V_*\oplus V_*$ corresponds to the composition of the forgetful inclusion of $\Ahat$ into $C_0(\cZ,\fK)$ with the Morita equivalence from $C_0(\cZ,\fK)$ to $C_0(\cZ)$ (which is implemented by the module $C_0(\cZ,V_*)$).

We now consider the restriction of the field $V_*$ of Hilbert spaces to the copy of the real line inside $\cZ$ given by the union
\[
\{q\in \R: q\leq 1/4\}\cup \{(q,1/2)\in \R\times \R : q\geq 1/4\},
\]
which we regard as parametrized by $q$. Correspondingly we restrict $\Hdirac$ to the Hilbert spaces over this line. Passing from $\Hdirac$ viewed as an operator on an $\Ahat$-Hilbert module to $\Hdirac|_\R$ viewed as an operator on a $C_0(\R)$-Hilbert module corresponds to the composition
\begin{equation*}
\Ahat \hookrightarrow C_0(\cZ,\fK) 
\underset{\scriptstyle\text{Morita}}{\sim} C_0(\cZ)\twoheadrightarrow C_0(\R).
\end{equation*}
We will see in Section \ref{K-theory} that this composition induces an isomorphism at the level of $KK$-theory. In this section we will identify the class $[C_0(\R,V_*|_\R),1,\Hdirac|_\R]$ in $KK^1(\C,C_0(\R))$.

\begin{rem}  The half-line $\{ q \in \R : q \geq 1/4\}$ has the following significance in representation theory.   The corresponding unitary representations $(V_{q,1/2},\pi_{q,1/2})$ all factor through $\SL_2(\R)$ and constitute the odd principal series $\pi_q$ of $\SL_2(\R)$.   In particular, the representation $\pi_{1/4,1/2}$ descends to the limit-of-discrete series for $\SL_2(\R)$.   Furthermore it is the direct sum of two irreducible representations whose characters $\theta_+$ and $\theta_-$ do not vanish on the elliptic set, and in this respect, they resemble representations in the discrete series.   So the term \emph{limit-of-discrete-series} for $\pi_{1/4,1/2}$ is surely apt. 
\end{rem}

We  can now construct a family of complex hermitian line bundles over $\R$
\[
\{L^{(k)\pm} \mid k\in \Z\}
\]
\noindent by glueing together selected one-dimensional eigenspaces of $\Hdirac$. 

The bifurcation of the spectrum at $q=1/4$, illustrated in Figure \ref{fig:spectra2}, plays a special role and we begin with the corresponding eigenspaces. Take $q\geq 1/4$ and consider the subspace $E_{1/2}$. The restriction $\Hdirac|_{E_{1/2}}$ is given by the matrix
\[
\begin{pmatrix}
1/2 & \sqrt{q-1/4} \\
\sqrt{q-1/4} & 1/2
\end{pmatrix}
\]
from which we readily see that the vector $\begin{pmatrix} f_{1/2}\\-f_{-1/2}\end{pmatrix}$ is an eigenvector of $\Hdirac$ with eigenvalue $1/2-\sqrt{q-1/4}$. Note that the eigenvalue tends to $1/2$ as $q\to 1/4^+$.

Now for $q<1/4$ we see that $\begin{pmatrix} f_{\ell}\\-f_{-\ell}\end{pmatrix}$ is an eigenvector of $\Hdirac$ with eigenvalue $\ell=1/2+\sqrt{1/4-q}$ tending to $1/2$ as $q\to 1/4^-$.

We can thus define the $\Hdirac$-invariant complex hermitian line bundle $L^{(0)-}$ as follows.

\[ 
\text{Define} \; L^{(0)-}_q : = \begin{cases}
\C\begin{pmatrix}
f_{\ell} \\ - f_{-\ell}
\end{pmatrix} & \text{for}\; q \leq 1/4, q = \ell(1 - \ell)\\\\
\C\begin{pmatrix}
f_{1/2} \\ - f_{-1/2}
\end{pmatrix} & \text{for}\; q \geq 1/4\\
\end{cases}
\]

On this line-bundle the field of operators $\Hdirac$ is simply multiplication by the function

\[ 
\omega(q): = \begin{cases}
\frac12+\sqrt{1/4-q} & \text{ for }q\leq 1/4\\
\frac12-\sqrt{q-1/4} & \text{ for }q\geq 1/4.
\end{cases}
\]
In particular we note that $\omega(q)$ tends to $\pm\infty$ as $q\to\mp\infty$, passing through $0$ when $q=1/2$. The field of operators induces an operator on the Hilbert module $C_0(\R,L^{(0)-})$ of $C_0$-sections of the bundle, which is again multiplication by $\omega$. This is sufficient to establish:

\begin{thm} The unbounded Kasparov triple $[C_0(\R,L^{(0)-}),1,\omega]$ represents the generator of $KK^1(\C,C_0(\R))$.
\end{thm}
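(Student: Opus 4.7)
The plan is to reduce the triple $[C_0(\R,L^{(0)-}),1,\omega]$ to a standard unbounded representative of a generator of $KK^1(\C,C_0(\R))$, namely $[C_0(\R),1,M_x]$ with $M_x$ multiplication by the coordinate function on $\R$. First I would trivialize $L^{(0)-}$: since $\R$ is contractible any Hermitian line bundle over it is trivial, and concretely the explicit fibres produce a global nowhere-vanishing section provided one verifies that $q\mapsto\begin{pmatrix}f_\ell\\-f_{-\ell}\end{pmatrix}$ (for $q\leq 1/4$) joins continuously to $q\mapsto\begin{pmatrix}f_{1/2}\\-f_{-1/2}\end{pmatrix}$ (for $q\geq 1/4$) at $q=1/4$. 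This continuity is precisely the statement that $\pi_{1/4,1/2}$ is the limit-of-discrete-series. Under the resulting isomorphism $C_0(\R,L^{(0)-})\cong C_0(\R)$ the operator $\Hdirac|_{L^{(0)-}}$ becomes multiplication by the scalar function $\omega(q)$, so the triple becomes $[C_0(\R),1,M_\omega]$.

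From the defining formula I would then verify that $\omega:\R\to\R$ is continuous at $q=1/4$ (both branches give $\omega(1/4)=1/2$), strictly monotone decreasing (by differentiating the square-root expressions), and proper with $\omega(q)\to+\infty$ as $q\to-\infty$ and $\omega(q)\to-\infty$ as $q\to+\infty$. Consequently the bounded transform $b(\omega):=\omega(1+\omega^2)^{-1/2}$ is a real-valued element of $C_b(\R)$ with $b(\omega)^2-1\in C_0(\R)$ and with limiting signs $b(\omega)(\mp\infty)=\pm 1$.

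The main step is to apply the standard classification: a bounded Kasparov module $[C_0(\R),1,M_F]$ with $F\in C_b(\R)$ real-valued and $F^2-1\in C_0(\R)$ has $KK^1$-class depending only on the pair $(\mathrm{sgn}\,F(-\infty),\mathrm{sgn}\,F(+\infty))$, with matching signs giving the zero class and opposite signs giving the two generators $\pm 1$ of $KK^1(\C,C_0(\R))\cong\Z$. Since $b(\omega)$ and $-b(x)$ have the same signs $(+1,-1)$ at $(-\infty,+\infty)$, the straight-line homotopy $s\mapsto(1-s)b(\omega)-sb(x)$ remains an admissible bounded Kasparov module throughout (its square differs from $1$ by an element of $C_0(\R)$ at every $s\in[0,1]$), giving $[C_0(\R),1,M_\omega]=-[C_0(\R),1,M_x]$, which is itself a generator.

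The main subtlety I anticipate is establishing the continuity of the global trivializing section of $L^{(0)-}$ across $q=1/4$, where the underlying Hilbert spaces transition from discrete to principal series. This continuity must be derived from the analysis in \cite{KM} identifying $\pi_{1/4,1/2}$ as the direct sum of the limit discrete representations $(\ds_{1/2,\pm},\omega_{1/2,\pm})$; once this is in place the remainder of the argument is the routine bounded-transform computation above.
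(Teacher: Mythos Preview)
Your proposal is correct and follows the same route the paper takes: the paper's entire argument is the sentence preceding the theorem, namely that $\omega$ is a proper function on $\R$ with $\omega(q)\to\pm\infty$ as $q\to\mp\infty$, which is the standard unbounded picture of a generator of $KK^1(\C,C_0(\R))$. You have simply supplied the details the paper leaves implicit---the trivialization of the line bundle, the bounded transform, and the explicit homotopy to $-M_x$---so there is no genuine difference in approach.
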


Similarly we have an $\Hdirac$-invariant line bundle $L^{(0)+}$ with
\[ 
\; L^{(0)+}_q : = \begin{cases}
\C\begin{pmatrix}
f_{\ell} \\ f_{-\ell}
\end{pmatrix} & \text{for}\; q \leq 1/4, q = \ell(1 - \ell)\\\\
\C\begin{pmatrix}
f_{1/2} \\ f_{-1/2}
\end{pmatrix} & \text{for}\; q \geq 1/4\\
\end{cases}
\]
on which the operator $\Hdirac$ is multiplication by the function
\[ 
\varepsilon(q): = \begin{cases}
\frac12+\sqrt{1/4-q} & \text{ for }q\leq 1/4\\
\frac12+\sqrt{q-1/4} & \text{ for }q\geq 1/4.
\end{cases}
\]
In this case we see that $\varepsilon(q)\to+\infty$ as $q\to\pm\infty$ from which we see that the corresponding Kasparov triple $[C_0(\R,L^{(0)+}),1,\varepsilon]$ represents the zero element of $KK^1(\C,C_0(\R))$.

We now examine how the remaining 2-dimensional subspaces $E_m$ match up at $q=1/4$.  For $q\geq 1/4$ we have $m=1/2+k$ where $k\in \Z$ and we exclude the case $k=0$ which we have already considered. 

For $k>0$ we take the 2-dimensional bundle $N^{(k)}$ whose fibres are $E_{1/2+k}$ for $q\geq 1/4$ and  $E_{\ell+k}$ for $q\leq 1/4$. These agree at $q=1/4$ since $\ell=1/2$ at this point.

For $k<0$ we take the 2-dimensional bundle $N^{(k)}$ whose fibres are $E_{1/2+k}$ for $q\geq 1/4$ and $E_{-\ell+1+k}$ for $q\leq 1/4$. We note that when $\ell=1/2$ we obtain $E_{-\ell+1+k}=E_{1/2+k}$.

Thus for each $k\neq 0$ the fibre of $N^{(k)}$ is $E_m$, where  $m$ is a continuous function of $q$ defined by

\[
m(q): = \begin{cases}
\frac12+\sqrt{1/4-q}+k & \text{ for }q\leq 1/4, k=1,2,\dots\\
\frac12-\sqrt{1/4-q}+k & \text{ for }q\leq 1/4, k=-1,-2,\dots\\
\frac12+k & \text{ for }q\geq 1/4, k\in \Z\setminus\{0\}
\end{cases}
\]

\noindent using the formula $\ell=1/2+\sqrt{1/4-q}$.

Recall that the eigenvalues of the restriction of $\Hdirac$ to $E_m$ are given by $\lambda^\pm=\frac{1}{2}\pm \sqrt{\Delta}$, where
\[
\Delta={1/4+q +2m(m-1)}. 
\]

We will now verify that the discriminant $\Delta$ is positive so that the eigenvalues are always distinct. 
Writing $2m(m-1)$ as $2(m-1/2)^2-1/2$ we see that 

\[
\Delta(k,q) = \begin{cases}
1/4-q+2(k^2+2|k|\sqrt{1/4-q}) & \text{ for }q\leq 1/4\\
q-1/4+2k^2 & \text{ for }q\geq 1/4.
\end{cases}
\]

This formula also makes sense when $k=0$ yielding the correct positive eigenvalue $\lambda^{+}(q)=\varepsilon(q)$, however we note that $\lambda^{-}(q)$ is not equal to $\omega(q)$. This difference is crucial since $\omega$ yields a non-trivial element of $K$-theory while $\lambda ^{-}$ would not.

We see that $\Delta$ is always at least $2$, and hence $\lambda^+\geq \frac 12+\sqrt{2}$ and $\lambda^-\leq \frac12-\sqrt{2}$. Thus the bundles of positive and negative eigenspaces within $N^{(k)}$ define $\Hdirac$-invariant line bundles which we denote $L^{(k)\pm}$. Moreover, for each $k$, $\lambda^+(q)\rightarrow+\infty$ as $q\to\pm\infty$ and $\lambda^-(q)\rightarrow-\infty$ as $q\to\pm\infty$.

This establishes the following result.

 \begin{thm}
 For each $k$ the Kasparov triples
 $[C_0(\R,L^{(k)\pm}),1,\lambda^{\pm}]$
 both represent the zero element of $KK^1(\C,C_0(\R))$.
 \end{thm}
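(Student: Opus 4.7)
The plan is to leverage the spectral gap of $\lambda^\pm$ established just above the statement, producing an explicit homotopy in the bounded Kasparov picture to a degenerate cycle.

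As a preliminary reduction, the line bundles $L^{(k)\pm}$ are trivialisable over the contractible base $\R$, so $C_0(\R, L^{(k)\pm})\cong C_0(\R)$ as Hilbert $C_0(\R)$-modules and the operator $\lambda^\pm$ acts as pointwise multiplication by the continuous self-adjoint function $q\mapsto \lambda^\pm(q)$. Its bounded transform is the multiplier
\[
F^\pm := \frac{\lambda^\pm}{\sqrt{1+(\lambda^\pm)^2}}.
\]
Since $\fK(C_0(\R))=C_0(\R)$, verifying that $[C_0(\R),1,F^\pm]$ is a bounded Kasparov cycle for $(\C,C_0(\R))$ reduces to checking $(F^\pm)^2-1\in C_0(\R)$, which holds because $\lambda^\pm(q)\to\pm\infty$ as $q\to\pm\infty$.

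The main step exploits the spectral gap. The bounds $\lambda^+\geq \tfrac12+\sqrt 2$ and $\lambda^-\leq \tfrac12-\sqrt 2$, combined with $\lambda^\pm(q)\to\pm\infty$ at both ends, force $F^\pm(q)\to\pm 1$ as $q\to\pm\infty$. Writing $F^\pm=(\pm 1)+h^\pm$ with $h^\pm\in C_0(\R)$, consider the linear path
\[
F^\pm_t:=(\pm 1)+(1-t)\,h^\pm,\qquad t\in[0,1].
\]
A direct expansion gives $(F^\pm_t)^2-1 = \pm 2(1-t)\,h^\pm + (1-t)^2(h^\pm)^2 \in C_0(\R)$ for every $t$, so each $F^\pm_t$ provides a valid bounded Kasparov cycle. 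Assembling the family into a self-adjoint multiplier of $C([0,1])\otimes C_0(\R)$ realises a Kasparov homotopy from $F^\pm$ at $t=0$ to the constant symmetry $\pm 1$ at $t=1$.

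The endpoint cycle $[C_0(\R),1,\pm 1]$ has operator equal to a scalar symmetry, so it is degenerate and represents the zero element of $KK^1(\C,C_0(\R))$; by homotopy invariance the original triples $[C_0(\R,L^{(k)\pm}),1,\lambda^\pm]$ vanish as well. The only step that might be considered delicate is the passage between the unbounded and bounded Kasparov pictures, but in this commutative, pointwise-multiplication setting this is entirely routine. The main obstruction to extending the argument elsewhere is the failure of the spectral gap, which is precisely what happens in the case $k=0$ along the branch yielding the generator $[C_0(\R,L^{(0)-}),1,\omega]$; this explains why that single class cannot be trivialised by the same homotopy.
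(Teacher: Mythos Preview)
Your argument is correct and follows the same line as the paper: the paper simply notes that $\Delta\geq 2$ (giving the bounds $\lambda^+\geq\tfrac12+\sqrt2$, $\lambda^-\leq\tfrac12-\sqrt2$) together with $\lambda^\pm(q)\to\pm\infty$ at both ends, and declares that ``this establishes'' the result---the implicit reason being that the bounded transform is a compact ($C_0$) perturbation of the scalar symmetry $\pm1$, exactly as you spell out with the linear homotopy $F_t^\pm$. You have supplied the routine details the paper leaves to the reader.
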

 
 Finally, we have
 \begin{thm} The Kasparov triple
 $$[C_0(\R,V_*|_\R),1,\hatD|_{\R}]$$
 generates $KK^1(\C,C_0(\R)))$.
 \end{thm}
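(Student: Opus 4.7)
The plan is to split the cycle $[C_0(\R,V_*|_\R),1,\hatD|_\R]$ along the $\hatD|_\R$-invariant orthogonal decomposition
\[
V_*|_\R\oplus V_*|_\R\ =\ L^{(0)-}\ \oplus\ M,\qquad M:=L^{(0)+}\oplus\bigoplus_{k\neq 0}\bigl(L^{(k)+}\oplus L^{(k)-}\bigr),
\]
assembled from the line bundles constructed in the preceding paragraphs. Once it is verified that the unbounded cycle genuinely decomposes along this splitting, additivity of classes gives
\[
[C_0(\R,V_*|_\R),1,\hatD|_\R]\ =\ [C_0(\R,L^{(0)-}),1,-\omega]\ +\ [C_0(\R,M),1,\hatD|_M]
\]
in $KK^1(\C,C_0(\R))$, the sign $-\omega$ appearing because $\hatD|_\R=-\Hdirac|_\R$.

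For the first summand, $-\omega$ is a proper continuous function on $\R$ differing from $\omega$ only by a sign, so by the first of the two preceding theorems its class is the additive inverse of the generator of $KK^1(\C,C_0(\R))\cong\Z$, hence itself a generator.

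For the second summand, the fibrewise spectrum of $\hatD|_M(q)$ consists of the values $-\varepsilon(q)$ together with $-\lambda^{\pm}(k,q)$ for $k\neq 0$. The uniform estimates $\varepsilon(q)\geq \tfrac12$ and $|\lambda^{\pm}(k,q)|\geq \sqrt{2}-\tfrac12$ for $k\neq 0$ established earlier in the section give $|\hatD|_M(q)|\geq \tfrac12$ for every $q\in\R$. Hence $\hatD|_M$ is a self-adjoint regular operator on $C_0(\R,M)$ with bounded inverse, and the associated cycle is operator-homotopic to the cycle whose operator is the self-adjoint unitary phase $\hatD|_M/|\hatD|_M|$. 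The latter splits $C_0(\R,M)$ into its positive and negative spectral subspaces and is degenerate, so $[C_0(\R,M),1,\hatD|_M]=0$ in $KK^1$. Summing the two contributions yields a generator, as required.

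The chief technical point that needs care is verifying that $\hatD|_M$ is regular with $(1+\hatD|_M^2)^{-1}\in\fK(C_0(\R,M))$, so that the Kasparov triple on $M$ and the associated cycle decomposition make sense in the first place. This reduces to showing that the fibrewise compact operator $q\mapsto(1+\Hdirac|_M(q)^2)^{-1}$ lies in $C_0(\R,\fK(M))$. The explicit eigenvalue formulae make this transparent: fibrewise the spectrum of $\Hdirac|_M(q)$ is discrete with each eigenvalue of multiplicity one, and at infinity even the smallest eigenvalue in absolute value (namely $\varepsilon(q)=\tfrac12+\sqrt{|q-1/4|}$) tends to $+\infty$, forcing the operator norm of $(1+\Hdirac|_M(q)^2)^{-1}$ to decay at $q\to\pm\infty$.
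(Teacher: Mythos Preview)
Your argument is correct and follows essentially the same route as the paper: both isolate the single line bundle $L^{(0)-}$ and show that the remaining summand $M$ contributes nothing. The paper establishes triviality on $M$ by splitting it further into $\bigoplus_k L^{(k)+}$ and $\bigoplus_{k\neq 0} L^{(k)-}$ and observing that the bounded transforms are compact perturbations of $+1$ and $-1$ respectively, whereas you argue directly from the uniform spectral gap that the phase is a self-adjoint unitary and hence degenerate; these are two phrasings of the same phenomenon. One small inaccuracy: in your final paragraph $\varepsilon(q)$ is not always the eigenvalue of smallest modulus on $M$ (for large $|q|$ the value $|\lambda^{-}(1,q)|=\sqrt{|q-1/4|+2}-\tfrac12$ is smaller), but since every branch satisfies $|\lambda^{\pm}(k,q)|\geq \sqrt{|q-1/4|+2}-\tfrac12\to\infty$ and $\varepsilon(q)\to\infty$ your conclusion that $\|(1+\Hdirac|_M(q)^2)^{-1}\|\to 0$ is unaffected.
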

 \begin{proof} It suffices to check that $[C_0(\R,V_*|_\R),1,\Hdirac|_{\R}]$  generates $KK^1(\C,C_0(\R)))$, noting that $[C_0(\R,V_*|_\R),1,\hatD|_{\R}]$ is its inverse.
 
We have seen that the field of Hilbert spaces $V_*$ over $\R$ can be decomposed as the direct sum of $L^{(k)\pm}$ for $k\in \Z$ and that the operator $\Hdirac$ respects this decomposition.

Now restricting $\Hdirac$ to an operator on sections of
$$\bigoplus_{k\in\Z} L^{(k)+}$$
the operator acts by multiplication by $\lambda^{+} = \frac{1}{2}+\sqrt{ \Delta(k,q)}$. The above formulas for the discriminant show that $\lambda^{+}$ tends to infinity as $k\to\pm\infty$ and also as $q\to\pm\infty$. Hence the corresponding bounded operator $\Hdirac(1+\Hdirac^2)^{-1/2}$ is a compact perturbation of the identity operator $1$. The corresponding bounded Kasparov triple is thus trivial.

Similarly restricting $\Hdirac$ to an operator on sections of
$$\bigoplus_{k\in\Z\setminus\{0\}} L^{(k)-}$$
the operator acts by multiplication by $\lambda^{-} = \frac{1}{2}- \sqrt{\Delta(k,q)}$. Since $\lambda^{-}$ tends to minus infinity as $k\to\pm\infty$ and as $q\to\pm\infty$ the corresponding bounded operator $\Hdirac(1+\Hdirac^2)^{-1/2}$ is a compact perturbation of $-1$, and again the Kasparov triple is trivial.

We conclude that neither of these restrictions contributes to the $K$-theory and thus
$$[C_0(\R,V_*|_\R),1,\Hdirac]=[C_0(\R,L^{(0)-}),1,\omega]$$
which is the generator of $KK^1(\C, C_0(\R))$.   
\end{proof}

\section{Computing the $K$-theory}\label{K-theory}

In this section we compute the $K$-theory of $\fA$ and establish that the Dirac class is a generator of $KK^1$. We begin with a convenient reparametrisation of the space 
$\cZ$. While this looks like the parameterisation used in \cite{KM} it is not directly related to it, as our parameterisation corresponds to the reduced $C^*$-algebra whereas theirs corresponds to the full $C^*$-algebra of $G$.  

Let $\cY$ be the union of the unit disc
$
A = \{z \in \C : |z| \leq 1\}
$
and the interval  $B = [1,2]$. 

The coordinate change
\begin{align*}
(q,\tau)&\mapsto (q + 3/4)^{-1}e^{2\pi i(\tau-1/2)}, &&\text{ for $q\geq 1/4$, $0\leq \tau\leq 1$}\\
q&\mapsto 2-(5/4-q)^{-1}, &&\text{ for $q\leq 1/4$}
\end{align*}
respects the identifications in the construction of $\cZ$, and transforms the locally compact parameter space $\cZ$ into a dense subspace $\cU$ of the compact parameter space $\cY$. Explicitly $\cU$ is the union of the punctured disc with the half-open interval $[1,2)$, and the limit-of-discrete-series corresponds to the point $1$. 
\begin{figure}[h] 
   \centering
   \includegraphics[width=2in]{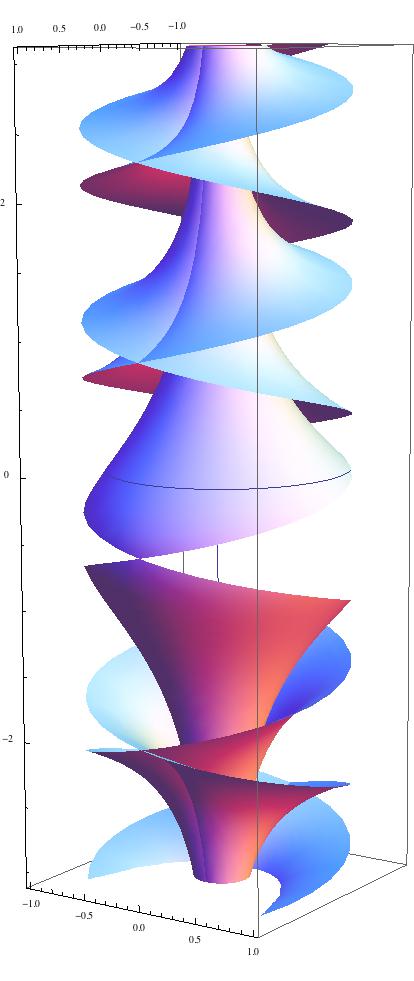}
   \caption{The spectrum of the Dirac operator localised over the principal series in $\cY$. }
   \label{fig:helix}
\end{figure}

We remark  that we can use the above parametrisation of the space $\cY$ to provide a more explicit representation of the spectrum 
of the Dirac operator localised over the principal series. Using the cylindrical coordinates  $(r,\theta,z)$ given by 
\[
r=(q+3/4)^{-1}, \theta=2\pi(\tau-1/2).
\] 
we can see that this part of the localised spectrum is given by a pair of intersecting helices, as in Figure \ref{fig:helix}.

\begin{lem}  The reduced $C^*$-algebra $\fA$ is strongly Morita equivalent to the $C^*$-algebra 
$\fD$ of all $2\times 2$-matrix-valued functions on the compact Hausdorff space $\cY$ which are diagonal on $B$, and vanish at $0$ and $2$.
\end{lem}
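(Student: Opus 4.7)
The plan is to chain together the equivalences and identifications already in hand. By Theorem \ref{Fourier transform of C^*_r(G)} the Fourier transform gives $\fA \cong \Ahat$, and the discussion immediately following that theorem exhibits $\Ahat$ as strongly Morita equivalent to
\[
\fB=\{F \in C_0(\cZ, M_2(\C)) : F(q) \text{ is diagonal for } q \le 1/4\}.
\]
Since strong Morita equivalence is transitive and preserved under $C^*$-isomorphism, it therefore suffices to produce a $C^*$-isomorphism $\fB \cong \fD$.

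For this I would use the coordinate change displayed at the beginning of this section. First I would verify that it defines a homeomorphism of $\cZ$ onto the dense locally compact subspace $\cU = (A \setminus \{0\}) \cup [1,2) \subset \cY$. On the principal-series cylinder the map $(q,\tau) \mapsto (q+3/4)^{-1} e^{2\pi i(\tau - 1/2)}$ is smooth, respects the identification $(q,0)\sim(q,1)$, and has image $A \setminus \{0\}$; on the discrete-series half-line the map $q \mapsto 2-(5/4-q)^{-1}$ is a homeomorphism of $(-\infty, 1/4]$ onto $[1,2)$; and at the gluing point both formulas send the limit-of-discrete-series to $1 \in \cY$. The only nontrivial continuity check is at this point $1$, where one must see that circles in $A$ of radius approaching $1$, together with left-intervals in $B$ approaching $1$, form a neighbourhood basis of $1$ in $\cY$ matching the topology of $\cZ$ at the limit-of-discrete-series.

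Precomposition with this homeomorphism yields $C_0(\cZ, M_2(\C)) \cong C_0(\cU, M_2(\C))$. Since $\cY$ is compact with $\cY \setminus \cU = \{0, 2\}$, a standard Gelfand identification gives
\[
C_0(\cU, M_2(\C)) \cong \{F \in C(\cY, M_2(\C)) : F(0) = 0 = F(2)\}.
\]
Under these identifications the condition ``$F(q)$ diagonal for $q \leq 1/4$'' in the definition of $\fB$ becomes the condition ``$F$ diagonal on $[1,2)$''; and since $F(2)=0$ is trivially diagonal this is equivalent to diagonality on all of $B=[1,2]$. This produces the desired isomorphism $\fB \cong \fD$, which combined with $\fA \cong \Ahat$ and $\Ahat \sim \fB$ yields strong Morita equivalence of $\fA$ with $\fD$.

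The main (mild) obstacle is the topological verification at the joining point $1 \in \cY$, matching the local structure of $\cZ$ at the limit-of-discrete-series with that of $\cY$ at $1$; the remaining steps are routine applications of Gelfand duality and functoriality of strong Morita equivalence under $C^*$-isomorphism.
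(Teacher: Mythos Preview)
Your proposal is correct and follows exactly the route the paper takes: use the Fourier isomorphism $\fA\cong\Ahat$, the Morita equivalence $\Ahat\sim\fB$ established earlier, and then identify $\fB\cong\fD$ via the coordinate change described at the start of the section. The paper compresses the last step into a single sentence, whereas you spell out the homeomorphism $\cZ\cong\cU$ and the Gelfand identification $C_0(\cU,M_2(\C))\cong\{F\in C(\cY,M_2(\C)):F(0)=F(2)=0\}$; this extra care is fine, though your description of neighbourhoods of $1$ in $\cY$ as ``circles of radius approaching $1$'' should really be small arcs (or discs intersected with $A$) near the point $1$.
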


\begin{proof}
By Theorem \ref{Fourier transform of C^*_r(G)} we have an isomorphism $\fA\cong \Ahat$ given by the Fourier transform and we have already seen that $\Ahat$ is Morita equivalent to $\fB$. The algebra $\fB$ is isomorphic to $\fD$ via the above change of coordinates.
\end{proof}

We now compute the $K$-theory.  Define a new $C^*$-algebra as follows:
\[
\fC: = \{F \in C(\cY,  M_2(\C)) : F(y)\;  \mathrm{\;  is \; diagonal \; on} \; B, F(2) = 0\}.
\]
The map
\[
\fC \to M_2(\C), \quad \quad F \mapsto F(0)
\]
then fits into an  exact sequence of $C^*$-algebras
\[
0 \to \fD \to \fC \to M_2(\C) \to 0.
\]

This yields the six-term exact sequence 
\begin{equation}\label{6-term}
\begin{CD}
K_0(\fD) @>>> K_0(\fC) @>>>  \Z\\
@AAA &                       &             @VVV\\
0 @<<<  K_1(\fC)  @<<<  K_1(\fD)
\end{CD}
\end{equation}

Note that $\cY$ is a contractible space.   The following homotopy is well-adapted to the $C^*$-algebra $\fC$.  
Given $z = x + iy \in \cY$ define $h_t$ as follows:
\[
h_t(x + iy) = 
\left\{
\begin{array}{ll}
x + (1 - 2t)iy & \quad \quad   0 \leq t \leq 1/2\\
x  + (2 - x)(2t - 1) & \quad \quad 1/2 \leq t \leq 1
\end{array}
\right.
\]
This gives a homotopy equivalence from $\cY$ to the point $\{2\}$. Given a function $F\in\fC$, the composition $F(h_t(x+iy))$ also lies in $\fC$ since if $x+iy=x\in B$ then $h_t(x+iy)$ also lies in $B$, and hence $F(h_t(x+iy))$ is diagonal as required.


Thus $F\mapsto F\circ h_t$ induces a homotopy equivalence from $\fC$ to the zero $C^*$-algebra, 
 and hence the connecting maps in (\ref{6-term}) are isomorphisms.

Since $K$-theory is an invariant of strong Morita equivalence, we have the following result.
\begin{thm}  Let $\fA$ denote the reduced $C^*$-algebra of the universal cover of $\SL_2(\R)$. Then
\[
K_0(\fA) = 0, \quad \quad K_1(\fA) = \Z.
\]
\end{thm}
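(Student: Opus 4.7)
The plan is to combine the three ingredients already assembled: the Morita equivalence $\fA \sim \fD$ from the preceding lemma, the six-term exact sequence coming from $0 \to \fD \to \fC \to M_2(\C) \to 0$, and the explicit homotopy $h_t$ that retracts $\cY$ onto $\{2\}$.

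First, since $K$-theory is an invariant of strong Morita equivalence, the lemma immediately reduces the computation of $K_*(\fA)$ to that of $K_*(\fD)$. So it suffices to identify $K_0(\fD)$ and $K_1(\fD)$ via the six-term sequence (\ref{6-term}).

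Second, I would verify that the homotopy $h_t$ induces a homotopy of $*$-homomorphisms $\fC \to \fC$, i.e.\ that $F \circ h_t$ belongs to $\fC$ whenever $F$ does. The only condition to check is that the interval $B$ is preserved by $h_t$ (so that diagonality is respected) and that $h_1$ maps everything to the point $2$ where elements of $\fC$ vanish — both are immediate from the formula: points of $B \subset \R$ have $y = 0$, so $h_t$ keeps them on the real axis, and at $t = 1$ every point of $\cY$ is sent to $2$. Consequently $F \mapsto F \circ h_t$ is a homotopy between the identity on $\fC$ and the zero homomorphism, so $\fC$ is $K$-contractible, giving $K_0(\fC) = K_1(\fC) = 0$.

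Third, feeding $K_*(\fC) = 0$ into the six-term sequence forces both connecting maps to be isomorphisms:
\[
K_1(\fD) \cong K_0(M_2(\C)) = \Z, \qquad K_0(\fD) \cong K_1(M_2(\C)) = 0.
\]
Combining with the Morita equivalence yields $K_0(\fA) = 0$ and $K_1(\fA) = \Z$.

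There is no real obstacle here — all the work has been done in setting up the exact sequence and the contracting homotopy. The only point that requires a moment's care is that $h_t$ really does preserve the subalgebra condition defining $\fC$ (the diagonality constraint on $B$), but this is precisely why the homotopy was chosen to slide vertically onto the real axis before sliding along $B$ to the endpoint $2$.
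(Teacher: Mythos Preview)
Your proof is correct and follows the paper's argument essentially verbatim: Morita equivalence reduces to $\fD$, the homotopy $h_t$ shows $\fC$ is contractible (you even spell out the check that $h_t$ preserves $B$ a bit more carefully than the paper does), and the six-term sequence then identifies $K_*(\fD)$ with $K_{*+1}(M_2(\C))$. The only microscopic omission is that one should also note $h_t(2)=2$ so that the vanishing condition at $2$ is preserved for all $t$, but this is immediate from the formula.
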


\mbox{}

Now recall that in Section \ref{the K-cycle} we considered the composition
\begin{equation}\label{restriction Morita}
\Ahat \hookrightarrow C_0(\cZ,\fK) \underset{\scriptstyle\text{Morita}}{\sim} C_0(\cZ)\twoheadrightarrow C_0(\R).
\end{equation}
At the level of $K$-theory this agrees with the composition
\[
\fD \hookrightarrow C_0(\cU,M_2(\C)) \underset{\scriptstyle\text{Morita}}{\sim} C_0(\cU)\twoheadrightarrow C_0((0,2)).
\]
which includes into the composition
\[
\fC \hookrightarrow C_0(\cY\setminus\{2\},M_2(\C)) \underset{\scriptstyle\text{Morita}}{\sim} C_0(\cY\setminus\{2\})\twoheadrightarrow C_0([0,2)).
\]
It follows that the 6-term exact sequence (\ref{6-term}) maps commutatively to the 6-term exact sequence
\begin{equation*}
\begin{CD}
K_0(C_0((0,2))) @>>> K_0(C_0([0,2))) @>>>  \Z\\
@AAA &                       &             @VVV\\
0 @<<<  K_1(C_0([0,2)))  @<<<  K_1(C_0((0,2)))
\end{CD}
\end{equation*}
from which we conclude that the map from $K_*(\fD)$ to $K_*(C_0((0,2)))$ is an isomorphism. Correspondingly $K_*(\Ahat)\to K_*(C_0(\R))$ is an isomorphism.

In Section \ref{the K-cycle} we showed that (\ref{restriction Morita}) takes the Kasparov triple $[\Ahat\oplus\Ahat,1,\Hdirac]$ to a generator of $KK^1(\C,C_0(\R))$. We thus conclude the following result.

\begin{thm}  Let $\fA$ denote the reduced $C^*$-algebra of the universal cover of $\SL_2(\R)$ and let $\Ahat\cong \fA$ denote its Fourier transform. Then $[\Ahat\oplus\Ahat,1,\hatD]$ is a generator of $KK^1(\C,\Ahat)$ and hence $[\fA\oplus\fA,1,D]$ is a generator of $KK^1(\C,\fA)$.
\end{thm}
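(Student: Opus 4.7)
The plan is to reduce the statement to the theorem at the end of Section \ref{the K-cycle}, which exhibits $[C_0(\R,V_*|_\R),1,\hatD|_\R]$ as a generator of $KK^1(\C,C_0(\R))$. The bridge is the composition (\ref{restriction Morita}), and the task naturally divides into two pieces: showing this composition induces an isomorphism on $K$-theory, and tracking the Dirac class through it.

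First I would translate the composition through the Morita equivalence $\Ahat \sim \fD$ and the reparametrisation $\cZ \to \cU \subset \cY$ from the preceding lemma, obtaining $\fD \hookrightarrow C_0(\cU,M_2(\C)) \sim C_0(\cU) \twoheadrightarrow C_0((0,2))$. I would then embed this into the analogous composition on $\cY\setminus\{2\}$ involving the larger algebra $\fC$, so that the short exact sequence $0 \to \fD \to \fC \to M_2(\C) \to 0$ sits compatibly inside the corresponding sequence $0 \to C_0((0,2)) \to C_0([0,2)) \to \C \to 0$ via evaluation at $0$, with the induced map on the quotient $K_0$'s being the identity $\Z \to \Z$.

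Next I would invoke the contractibility of $\cY$ via the explicit homotopy $h_t$, which preserves the diagonality condition on $B$; this is exactly the argument already used to treat $\fC$. This gives $K_*(\fC) = 0$, and combined with the obvious $K_*(C_0([0,2))) = 0$ the two six-term exact sequences both collapse so that the boundary maps become isomorphisms. An application of the five-lemma to the resulting morphism of six-term exact sequences forces $K_*(\fD) \to K_*(C_0((0,2)))$, and hence $K_*(\Ahat) \to K_*(C_0(\R))$, to be an isomorphism.

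Finally I would combine this with the theorem of Section \ref{the K-cycle}: under the bridge, the class $[\Ahat\oplus\Ahat,1,\Hdirac]$ maps to the generator $[C_0(\R,V_*|_\R),1,\Hdirac|_\R]$ of $KK^1(\C,C_0(\R))$, so it is itself a generator of $KK^1(\C,\Ahat)$. Since $\hatD = -\Hdirac$, the class $[\Ahat\oplus\Ahat,1,\hatD]$ is the additive inverse of a generator and is therefore also a generator; transporting via the Fourier transform isomorphism $\fA \cong \Ahat$ gives the stated generator of $KK^1(\C,\fA)$. I do not expect any hard calculation in this argument: the main delicacy is purely the bookkeeping needed to confirm that the bridge factors compatibly through the two six-term exact sequences so that the five-lemma genuinely applies, after which the previously established theorem finishes the proof immediately.
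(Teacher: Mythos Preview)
Your proposal is correct and follows essentially the same route as the paper: translate the bridge to $\fD\to C_0((0,2))$, embed into $\fC\to C_0([0,2))$, and compare the two six-term sequences (both middle terms having trivial $K$-theory) to deduce the bridge is a $K$-isomorphism, then invoke the end-of-Section~\ref{the K-cycle} theorem. Your explicit mention of the five-lemma and of the sign $\hatD=-\Hdirac$ are exactly the small justifications the paper leaves implicit.
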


\section{Dirac cohomology}
For the moment, let $G$ be a connected real reductive group with a maximal compact subgroup $K$.   Let $\mathfrak{g}$ be the Lie algebra of
$G$, and let $\widetilde{K}$ be the spin double cover of $K$ as defined in \cite[3.2.1]{HP}.   

\begin{defn} The Dirac cohomology of a $(\mathfrak{g}, K)$-module $X$ is the $\widetilde{K}$-module
\[
H_D(X) = Ker(D) / Im(D) \cap Ker(D)
\]
where $D$ is considered as an operator on $X \otimes S$ and $S$ is a space of spinors.
\end{defn}
  For unitary modules, the Dirac cohomology is just 
the kernel of $D$ on $X \otimes S$, see \cite[p.64]{HP}.  If $X$ is irreducible, then the Dirac cohomology is finite-dimensional \cite[p.62]{HP}. 

In the present context, where $G$ is the universal cover of $\SL_2(\R)$, it seems natural to define the Dirac cohomology, localised at a point in the reduced dual,
to be the kernel of the Dirac operator, localised at that point.

To determine the kernel, we have to ascertain when zero belongs to the local spectrum.   At this point, we invoke Theorem 3.2.   By inspection, the local spectra in
the discrete series do not contain $0$.   The spectrum of $D$ localised at the point  $(q,t)$ in the principal series is
\[
\{- \frac{1}{2} \pm \sqrt{1/4 + q + 2(\tau + k)(\tau + k - 1)} : k \in \Z\}
\]
This set contains $0$ if and only if $k = 0$.   The spectra containing $0$ are parametrised, in the $(q,\tau)$-plane, by the
parabolic arc
\begin{align*}\label{arc}
\mathcal{Q} : q + 2\tau(\tau - 1) = 0
\end{align*}
subject to the condition 
\[
\tau \in [1/2 - 1/\sqrt 8, 1/2 + 1/\sqrt 8].
\]  

At this point, we have to examine the action of the Dirac operator on the subspace $E_{\tau}$ defined in \S 3.   We obtain the following result.
\begin{thm} The Dirac cohomology is  concentrated in the parabolic arc $\mathcal{Q}$, and is given by a field of $2$-spinors 
over $\mathcal{Q}$.  The Dirac cohomology is one-dimensional at each point of $\mathcal{Q}$.  
With respect to the basis defined in \S 3, the Dirac cohomology is spanned, at the point $(q,\tau) \in \mathcal{Q}$, by the $2$-spinor
\[
\left(
\begin{array}{c}
\sqrt{1 - \tau}\\
- \sqrt {\tau}
\end{array}
\right)
\]
\end{thm}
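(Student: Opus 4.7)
The plan is to exploit the explicit spectral description given in Theorem \ref{Theorem_3.2} together with the block-diagonal decomposition of $\hatD$ into the 2-dimensional invariant subspaces $E_m$ constructed in Section \ref{the K-cycle}. Since the Dirac cohomology (localised at a tempered irreducible) is, by unitarity, simply the kernel of the localised Dirac operator, the task reduces to (a) deciding at which points of the reduced dual the local spectrum contains $0$, and (b) reading off the kernel on the corresponding invariant subspace.

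For step (a), I would first rule out the discrete series: the spectrum of $D$ localised at $\omega_{\ell,\pm}$ contains the value $-\ell$ with $\ell>1/2$, while the remaining eigenvalues $-\tfrac12\pm\sqrt{1/4+\ell(1-\ell)+2(\ell+k)(\ell+k-1)}$ with $k\geq 1$ are easily seen to be nonzero by expanding $\ell(1-\ell)+2(\ell+k)(\ell+k-1)$ as a polynomial in $\ell$ which is strictly greater than $0$ on $\ell>1/2$, $k\geq1$. In the principal series, the condition $-\tfrac12\pm\sqrt{1/4+q+2(\tau+k)(\tau+k-1)}=0$ forces the plus sign and $q+2(\tau+k)(\tau+k-1)=0$. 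Since the principal series requires $q\geq 1/4$, and since $0\leq\tau\leq1$, the quantity $2m(m-1)$ with $m=\tau+k$ is minimised at $k=0$ (where $m=\tau\in[0,1]$), and only there can $q+2m(m-1)$ vanish. This yields the parabolic arc $\mathcal{Q}:\,q=2\tau(1-\tau)$, and the endpoint constraint $\tau\in[1/2-1/\sqrt{8},\,1/2+1/\sqrt{8}]$ comes from $2\tau(1-\tau)\geq1/4$.

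For step (b), at a point $(q,\tau)\in\mathcal{Q}$ the relevant invariant subspace is $E_\tau$, spanned by $\begin{pmatrix} f_\tau\\0\end{pmatrix}$ and $\begin{pmatrix} 0\\ f_{\tau-1}\end{pmatrix}$, on which $\Hdirac$ is given by the matrix \eqref{thematrix} with $m=\tau$. Substituting $q=2\tau(1-\tau)$ yields $q+\tau(\tau-1)=\tau(1-\tau)$, and the matrix becomes
\[
\begin{pmatrix} \tau & \sqrt{\tau(1-\tau)} \\ \sqrt{\tau(1-\tau)} & 1-\tau \end{pmatrix},
\]
whose trace is $1$ and determinant is $0$, so its spectrum is $\{0,1\}$ with $0$ simple. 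A direct check shows the kernel is spanned by $\bigl(\sqrt{1-\tau},\,-\sqrt{\tau}\bigr)^{T}$, which is exactly the claimed spinor (and $\hatD=-\Hdirac$ does not alter the kernel). The one-dimensionality follows from simplicity of this eigenvalue, and from the fact that no other $E_m$ at $(q,\tau)$ contributes $0$ to the spectrum by the uniqueness in step (a).

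I do not anticipate any serious obstacle: the main technical point is keeping track of the case analysis in step (a) to confirm that the arc is \emph{the} locus of zero eigenvalues and that the kernel is always one-dimensional (in particular, that the double-eigenvalue phenomena at $\tau=0$ or $\tau=1/2$, noted after Theorem \ref{Theorem_3.2}, occur away from $0$ on $\mathcal{Q}$). The explicit spinor is then produced by an elementary linear-algebra calculation on the $2\times 2$ block.
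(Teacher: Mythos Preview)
Your proposal is correct and follows essentially the same route as the paper: the paper also invokes Theorem \ref{Theorem_3.2}, rules out the discrete series by inspection, reduces the principal-series condition to $k=0$ and the arc $\mathcal{Q}$, and then examines the action on $E_\tau$. You simply supply more detail at each step (the explicit positivity argument for the discrete series, the minimisation over $k$, and the $2\times 2$ kernel computation), all of which the paper leaves implicit.
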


The centre of the parabolic arc $\mathcal{Q}$ is the point $(1/2,1/2)$ in the $(q,\tau)$-plane,
i.e. the principal series representation $\pi_{1/2,1/2}$.   This representation factors through an irreducible representation $X$ of $\SL_2(\R)$ in the odd
principal series.    We note that the irreducible unitary module $X \otimes S$ satisfies Vogan's conjecture, see \cite[3.2.6]{HP}.

\end{document}